\let\oldsection=\section
\renewcommand{\@seccntformat}[1]{\bf\@nameuse{the#1}.\quad}
\renewcommand\section{\@startsection{section}{1}%
                   \z@{.7\linespacing\@plus\linespacing}{.5\linespacing}%
                   {\normalfont\bfseries \boldmath}}
\renewcommand\subsection{\@startsection{subsection}{2}%
                   \z@{.5\linespacing\@plus.7\linespacing}{-.5em}%
                   {\normalfont\bfseries \boldmath}}
\renewcommand\subsubsection{\@startsection{subsubsection}{3}%
                   \z@{.3\linespacing\@plus.5\linespacing}{-.5em}%
                   {\normalfont\bfseries \boldmath}}
\newcommand{\CatO}{\mathcal{O}}
\newcommand{\La}{\mathfrak{g}}
\newcommand{\Lsa}{\mathfrak{t}}
\newcommand{\p}{\mathfrak{p}}
\newcommand{\levi}{\mathfrak{l}}
\newcommand{\nil}{\mathfrak{u}}
\newcommand{\wt}{{\sf wt}}
\newcommand{\Uq}{\mathcal{U}_q}
\newcommand{\lj}{\mathfrak{l}_J}
\newcommand{\pj}{\mathfrak{p}_J}
\newcommand{\ch}{{\sf ch}}
\theoremstyle{plain}
\newtheorem{thm}{Theorem}[subsection]
\newtheorem{lem}[thm]{Lemma}
\newtheorem{prop}[thm]{Proposition}
\theoremstyle{definition}
\newtheorem{defn}[thm]{Definition}
\numberwithin{equation}{subsection}
\newcounter{listequation}
\def\Z{{\mathbb Z}}
\def\C{\mathbb C}
\def\:{\colon}
\def\la{\lambda}
\def\Ker{\text{Ker }}
\def\Hom{\text{Hom}}
\def\Ext{\text{Ext}}
\def\WJ{{}^JW}
\def\ip#1,#2{\langle #1, #2 \rangle}
\def\ch{\operatorname{ch}}
\newcommand{\BU}{{\mathbb U}}
\def\rddots{\mathinner{\mkern1mu\raise1pt\vbox{\kern7pt\hbox{.}}\mkern2mu\raise4pt\hbox{.}\mkern2mu
\raise7pt\hbox{.}\mkern1mu}}
\def\Label#1{\label{#1}{\tt [#1]}}
\def\Label{\label}
\begin{document}


\title[Cohomology of quantum groups: An analog of Kostant's Theorem ]
{Cohomology of quantum groups: An analog of Kostant's Theorem}

\begin{abstract}
We prove the analog of Kostant's Theorem on Lie algebra cohomology in
the context of quantum groups. We prove that Kostant's cohomology
formula holds for quantum groups at a generic parameter $q$,
recovering an earlier result of Malikov in the case where the
underlying semisimple Lie algebra $\mathfrak{g} = \mathfrak{sl}(n)$.
We also show that Kostant's formula holds when $q$ is specialized to
an $\ell$-th root of unity for odd $\ell \ge h-1$ (where $h$ is the
Coxeter number of $\mathfrak{g}$) when the highest weight of the
coefficient module lies in the lowest alcove.  This can be regarded as
an extension of results of Friedlander-Parshall and Polo-Tilouine on
the cohomology of Lie algebras of reductive algebraic groups in prime
characteristic.  
\end{abstract}

\stepcounter{footnote}
\author[UNIVERSITY OF GEORGIA VIGRE ALGEBRA GROUP]{UNIVERSITY OF
GEORGIA VIGRE ALGEBRA GROUP${}^\thefootnote$}
\footnotetext{The members of the UGA VIGRE Algebra Group are
Irfan Bagci, Brian D. Boe, Leonard Chastkofsky, Benjamin Connell, Benjamin Jones,
Wenjing Li, Daniel K. Nakano, Kenyon J. Platt, Jae-Ho Shin, Caroline B. Wright.}

\address{Department of Mathematics \\
                 University of Georgia\\ Athens, Georgia 30602  }

\date{\today}

\subjclass{}

\keywords{}

\dedicatory{}

\maketitle

\section{Introduction}

\subsection{} Let ${\mathfrak g}$ be a complex semisimple Lie algebra, $L(\lambda)$ be 
a finite dimensional ${\mathfrak g}$-module, ${\mathfrak p}_{J}$ be a parabolic 
subalgebra, ${\mathfrak u}_{J}$ be the nilradical of ${\mathfrak p}_{J}$ and ${\mathfrak l}_{J}$ 
be the associated Levi subalgebra. A celebrated result of Kostant states that the 
cohomology $\operatorname{H}^{\bullet}({\mathfrak u}_{J},L(\lambda))$ 
is the (multiplicity-free) direct sum of finite dimensional 
${\mathfrak l}_{J}$-modules, $L_{J}(w\cdot \lambda)$,  with $w\in {}^{J}W$ (minimal 
length coset representatives of $W_{J}\backslash W$ where $W_{J}$ is the Weyl group associated 
to the subset $J$ of simple roots). There are many proofs of this
result (e.g. \cite{Knapp,GW,RC}), including ones which 
involve constructing BGG resolutions. Recently the authors \cite{UGAVIGRE} 
provided a proof of Kostant's theorem by utilizing linkage in the parabolic category ${\mathcal O}_{J}$. 

In this paper we will prove an analog of Kostant's theorem in the case of quantum 
groups over generic parameters and at roots of unity. Our proof uses many of the ideas 
presented in \cite{UGAVIGRE}. More specifically, we first construct a subalgebra 
${\mathcal U}_q({\mathfrak u}_{J})$ of the quantized enveloping algebra ${\mathcal U}_{q}({\mathfrak g})$ 
by using the existence of a PBW basis given by Lusztig. We can then consider 
finite dimensional integrable modules for ${\mathcal U}({\mathfrak g})$, and calculate 
the cohomology $\operatorname{H}^{\bullet}({\mathcal U}_{q}({\mathfrak u}_{J}),L^{q}(\lambda))$ as 
a ${\mathcal U}_{q}({\mathfrak l}_{J})$-module. In the case when $q$ is a generic parameter 
our results recover an earlier result of Malikov \cite{M} when ${\mathfrak g}={\mathfrak sl}(n)$ and 
${\mathfrak u}_{J}={\mathfrak u}$ is the unipotent radical of a Borel subalgebra. We also consider the 
case when $q$ is specialized to an $l$th root of unity. In this situation, when $l\geq h-1$ 
($h$ being the Coxeter number of the underlying root system), we prove a version 
of Kostant's theorem for weights in the lowest alcove. These results can be regarded 
as an extension of results by Friedlander and Parshall \cite{FrPa:86b} and by Polo and Tilouine \cite[Corollary 2.10]{PT} for the cohomology of 
Lie algebras of reductive algebraic groups in prime characteristic. 
 
\subsection{} The paper is organized as follows. The basic information we shall need about quantum groups is in Section \ref{S:QuantumGroups}. In Section \ref{S:CategoryO} 
we introduce a parabolic 
version of Category ${\mathcal O}$ for quantum groups which generalizes the 
ordinary quantum group Category ${\mathcal O}^{q}$ (as defined in \cite{HK}). 
Within these categories blocks can be defined and the simple modules in a block are 
parametrized by standard ``linkage classes''. This information gives us 
upper bounds on the composition factors in the cohomology. In Section \ref{S:EulerCharacters}, 
we provide results which compare the Euler characteristics on the ordinary ${\mathfrak u}_{J}$-cohomology 
with the cohomology for ${\mathcal U}_{q}({\mathfrak u}_{J})$. These techniques provide 
effective lower bounds on the composition factors in $\operatorname{H}^{\bullet}({\mathcal U}_{q}({\mathfrak u}_{J}),L^{q}(\lambda))$.  With the results in Sections 
\ref{S:CategoryO} and \ref{S:EulerCharacters}, we 
prove the quantum version of Kostant's theorem in Section~\ref{S:Kostant} 
(Theorems~\ref{T:KostantK} and 
~\ref{T:KostantL}). In Section~\ref{S:RootsOfUnity}, we apply linkage in a 
graded version of the small 
quantum group $u_{\zeta}({\mathfrak g})$ where $\zeta$ is a primitive $l$th root of unity, along with 
the Euler characteristic results in Section \ref{S:EulerCharacters}, to prove the 
quantum version of the 
Polo-Tilouine result. 

\section{Quantum Groups} \Label{S:QuantumGroups}

\subsection{} We will follow the conventions as described in
\cite[Section 2]{BNPP}. Let ${\mathfrak g}$ be a complex simple Lie algebra. 
Let $\Phi$ be the irreducible root system associated to ${\mathfrak g}$
and $\Delta=\{\alpha_1, \ldots, \alpha_r\}$ be a fixed set of simple roots. The set $\Phi$ spans a real
vector space $\mathbb E$ with positive definite inner product
$\langle u,v\rangle$, $u,v\in \mathbb E$, adjusted so that
$\langle\alpha,\alpha\rangle=2$ if $\alpha\in\Phi$ is a short root.
For $\alpha\in\Phi$, let $\alpha^\vee=\frac{2}{\langle\alpha,\alpha\rangle}\alpha$.
For $J\subseteq \Delta$, let $\Phi_{J}=\Phi\cap {\mathbb Z}J$ be the root subsystem of
$\Phi$ generated by $J$. Let $W$ be the Weyl group corresponding to $\Phi$ and 
$W_{J}$ the Weyl group of $\Phi_{J}$, viewed as a subgroup of $W$. Let ${^J}W$ denote 
the set of minimal length coset representatives for $W_{J}\backslash W$.

Define the fundamental dominant weights $\varpi_1,\cdots,\varpi_r$
by $\langle\varpi_i,\alpha_j^\vee\rangle=\delta_{i,j}$, so
the (integral) weight lattice $X={\mathbb Z}\varpi_1\oplus \cdots\oplus{\mathbb Z}\varpi_r$,
and the set of dominant (integral) weights is
$X^+={\mathbb N}\varpi_1\oplus\cdots\oplus {\mathbb N}\varpi_r$, where ${\mathbb N}$
denotes the set of nonnegative integers. Let
$$X_J^+=\{\,\lambda\in X : \langle\lambda,\check\alpha\rangle \in\mathbb N \text{ for all } \alpha\in J\,\}$$ 
be the set of dominant integral weights for $J$. Set $\rho=\varpi_{1}+\dots+\varpi_{r}$.

Let ${\mathfrak t}$ be a fixed maximal toral subalgebra of $\mathfrak g$.
Given $\alpha\in\Phi$, let ${\mathfrak g}_\alpha$ be the $\alpha$-root space. Put ${\mathfrak
b}^+=\mathfrak t\oplus\bigoplus_{\alpha\in\Phi^+}{\mathfrak
g}_\alpha$ (the positive Borel subalgebra), and ${\mathfrak b}=
\mathfrak t\oplus\bigoplus_{\alpha\in\Phi^-}{\mathfrak g}_\alpha$
(the opposite Borel subalgebra). More generally, given a subset $J \subseteq \Delta$,
one can consider the Levi and parabolic Lie subalgebras ${\mathfrak l}_J$ and
${\mathfrak p}_J = {\mathfrak l}_J\oplus{\mathfrak u}_J$ of $\mathfrak g$.

\subsection{} Throughout this paper let $l>1$ be a fixed odd positive
integer. If $\Phi$ has type $G_2$, then we assume that $3$ does not divide
$l$. Let ${\mathcal A}={\mathbb Q}[q,q^{-1}]$ with fraction field ${\mathbb Q}(q)$.
Let $\zeta=\sqrt[l]{1}\in\mathbb C$ be a primitive $l$th root of unity
and $k={\mathbb Q}(\zeta)$. One can regard $k$ as an $\mathcal
A$-algebra by means of the homomorphism $\mathbb{Q}[q,q^{-1}]\to k$ where
$q\mapsto \zeta$.

The quantized enveloping algebra $\BU_q({\mathfrak g})$ of
$\mathfrak g$ is the ${\mathbb Q}(q)$-algebra with generators
$E_{\alpha}$, $K_{\alpha}^{\pm 1}$, $F_{\alpha}$, $\alpha\in \Delta$ and
relations (R1)--(R6) listed in \cite[(4.3)]{Jan3}.
The algebra $\BU_q({\mathfrak g})$ has two $\mathcal A$-forms,
$U_q^{\mathcal A}({\mathfrak g})$ (due to Lusztig) and
${\mathcal U}_q^{\mathcal A}({\mathfrak g})$ (due to De
Concini and Kac). Often we will use ${\mathcal U}_{q}({\mathfrak g})$ to denote 
the latter ${\mathcal A}$-form specialized to a generic parameter. After base change to $k$, these 
algebras play roles analogous to the hyperalgebra of a reductive group and the universal
enveloping algebra of its Lie algebra, respectively. Set
\begin{equation*}\label{firstquantumgroup}
U_\zeta({\mathfrak g}):=k\otimes_{\mathcal A}U^{\mathcal A}_q({\mathfrak g})/\langle1\otimes
K_\alpha^l-1\otimes 1,\alpha\in\Delta \rangle.\end{equation*}
The elements $E_\alpha,K_\alpha,F_\alpha$, $\alpha\in\Delta$,
in $U_{\zeta}({\mathfrak g})$ generate a
Hopf subalgebra, denoted $u_\zeta({\mathfrak g})$, of
$U_\zeta({\mathfrak g})$.

Recall that ${\mathcal U}^{\mathcal A}_q({\mathfrak g})$ is the $\mathcal
A$-subalgebra of $\BU_q({\mathfrak g})$ generated by the $E_\alpha, F_\alpha,
K_\alpha^{\pm 1}$, $\alpha\in\Delta$. There is an inclusion of ${\mathcal A}$-forms:
${\mathcal U}_q^{\mathcal A}({\mathfrak g})\subseteq {U}_q^{\mathcal A}({\mathfrak g})$.
Now set ${\mathcal U}_k({\mathfrak g}):=k\otimes_{\mathcal A}
{\mathcal U}^{\mathcal A}_q({\mathfrak g})$.
Finally, put
\begin{equation*}\label{secondquantumgroup}
{\mathcal U}_{\zeta}({\mathfrak g})={\mathcal
U}_k({\mathfrak g})/\langle 1\otimes K_\alpha^l-1\otimes 1,\alpha\in\Delta\rangle.
\end{equation*}
The (Hopf) algebra ${\mathcal U}_\zeta({\mathfrak g})$ has a central subalgebra
${\mathcal Z}$ such that $u_{\zeta}({\mathfrak g})\cong{\mathcal
U}_\zeta({\mathfrak g})//{\mathcal Z}$ (cf.\  \cite{DK} for more details).
The finite dimensional Hopf algebra $u_{\zeta}({\mathfrak g})$ will be referred to
as the {\em small quantum group}.

We will assume throughout that all $U_\zeta$-modules we consider are
integrable and type 1 (cf.\  \cite[Section 2.2]{BNPP}). Given such a module $V$, let $\wt(V)$ denote its 
set of weights.

\subsection{\bf Levi and Parabolic Subalgebra}\label{PBW}
For each $\alpha \in \Delta$, Lusztig has defined an automorphism
$T_{\alpha}$ of $\BU_q({\mathfrak g})$ (cf.\ \cite[Ch.\ 8]{Jan3}). If
$s_{\alpha}$ is a simple reflection in $W$, let $T_{s_{\alpha}} :=
T_{\alpha}$.  More generally, given any $w \in W$, let $w =
s_{\beta_1}s_{\beta_2}\cdots s_{\beta_n}$ be a reduced expression, and
define $T_w := T_{\beta_1}\cdots T_{\beta_n} \in
\text{Aut}(\BU_q({\mathfrak g}))$. The automorphism $T_w$ is
independent of the reduced expression of $w$.

Now let $J \subseteq \Delta$ and fix a reduced expression $w_0
=s_{\beta_1}\cdots s_{\beta_N}$ that starts with a reduced expression
for the long element $w_{0,J}$ for $W_{J}$.  The fixed reduced
expression for $w_{0}$ induces a fixed ordering on the positive roots,
which in turn leads to a definition of ``root vectors'' $E_{\gamma}$,
$F_{\gamma}$ for each ${\gamma}\in \Phi^{+}$, by using the
automorphisms above, as in \cite[Section 2.4]{BNPP}. Note that
$E_{\gamma}$ has weight $\gamma$, and $F_{\gamma}$ has weight
$-\gamma$. The vectors $E_{\gamma}$, $F_{\gamma}$, and $K^{\pm
  1}_{\alpha}$ for $\alpha\in\Delta$ form a PBW-like basis for
${\BU}_{q}(\mathfrak g)$.

The universal enveloping algebras of the Levi and parabolic
subalgebras associated to $J$ will be denoted by $\BU({\mathfrak
  l}_{J})$ and $\BU({\mathfrak p}_{J})$. One can naturally define
corresponding quantized enveloping algebras $\BU_q({\mathfrak l}_{J})$
and $\BU_q({\mathfrak p}_{J})$. As subalgebras of $\BU_q({\mathfrak
  g})$, $\BU_q({\mathfrak l}_{J})$ is generated by $\{E_{\alpha},
F_{\alpha} : \alpha \in J\} \cup \{K_{\alpha}^{\pm 1} : \alpha \in
\Delta\}$, and $\BU_q({\mathfrak p}_{J})$ is generated by
$\{E_{\alpha}: \alpha \in J\} \cup \{F_{\alpha}, K_{\alpha}^{\pm 1} :
\alpha \in \Delta\}$. Upon specialization one obtains the subalgebras
$U_{\zeta}({\mathfrak l}_{J})$, $U_{\zeta}({\mathfrak p}_{J})$,
$u_{\zeta}({\mathfrak l}_{J})$, $u_{\zeta}({\mathfrak p}_{J})$ of
$U_\zeta({\mathfrak g})$, and $\mathcal{U}_{\zeta}({\mathfrak l}_{J})$
and $\mathcal{U}_{\zeta}({\mathfrak p}_{J})$ of ${\mathcal
  U}_\zeta({\mathfrak g})$.  One can also make analogous constructions
with the opposite parabolic ${\mathfrak p}^{+}_{J}$.

With the PBW basis as described above one can define a subalgebra
$\BU_q({\mathfrak u}_{J})$ which is analogous to that of
$\BU({\mathfrak u}_{J}) \subset \BU({\mathfrak p}_{J})$.  Let $\Phi^+
\backslash \Phi_J^+ = \{ \gamma_{i_1}, \ldots, \gamma_{i_N} \}$ and
set $\BU_q({\mathfrak u}_{J})$ to be the subspace spanned by the
$F_{\gamma_{i_1}}^{a_{i_1}}\cdots F_{\gamma_{i_N}}^{a_{i_N}}$,
$a_{i_j} \in \mathbb{Z}_{\ge 0}$. According to \cite[Lemma
2.4.1]{BNPP} $\BU_q({\mathfrak u}_{J})$ is a subalgebra of
$\BU_q({\mathfrak p}_{J})$ and independent of the choice of reduced
expression for $w_0$. Again by specializing one obtains algebras
$U_{\zeta}({\mathfrak u}_{J})$ and $u_{\zeta}({\mathfrak u}_{J})$.

\section{Quantum Parabolic Category $\mathcal{O}$} \Label{S:CategoryO}

\subsection{Category ${\mathcal O}_{J}^{q}$}

Define a partial order on $X$ by
$$\mu\leq\lambda \quad \Longleftrightarrow \quad \lambda-\mu
\textrm{ is a sum of positive roots.}$$  Given $\lambda\in X$, define
$$D(\lambda)=\{\,\mu\in X: \mu\leq\lambda\,\}.$$  


The simple finite dimensional $\mathcal{U}_{q}(\lj)$-modules are
parameterized by the set $X_J^+$.  Denote the simple module of highest
weight $\lambda\in X_J^+$ by $L^{q}_J(\lambda)$.

\begin{defn}
The Category $\CatO_{J}^q$ 
consists of $\Uq(\La)$-modules $V$
 which satisfy the following conditions:
\begin{enumerate}
\item as a
$\mathcal{U}_q(\lj)$-module,
$$V=\bigoplus_{\lambda\in X_J^+}m_\lambda L^{q}_J(\lambda)$$ for some $m_\lambda\in
\mathbb{N}$;
\item there exist $\lambda_1,\lambda_2,\ldots ,\lambda_t\in X$
such that $$\wt(V)\subseteq D(\lambda_1)\cup
D(\lambda_2)\cup\cdots \cup D(\lambda_t).$$
\end{enumerate}
\end{defn}

\subsection{Verma Modules}

For each $\lambda \in X_J^+$, extend the finite dimensional
irreducible $\Uq(\lj)$-module 
$L^q_{J}(\lambda)$ to a
$\Uq(\pj)$-module by requiring that $F_\gamma
L_{J}(\lambda) = 0$ for all $\gamma \in \Phi^{+}
\smallsetminus \Phi_{J}^{+}$.

The \emph{parabolic Verma module} corresponding to $\lambda \in X_J^+$ is
defined by
$$Z_J^q(\lambda) := \Uq(\La) \otimes_{\Uq(\pj)}
L_J^q(\lambda) .$$
As in the classical setting (cf.\ \cite[Proposition 3.3]{RC}), the following properties hold:
\begin{enumerate}
\item $Z_J^q(\lambda)$ is an object of $\CatO_J^q$.
\item $Z_J^q(\lambda)$ is a highest weight module with highest weight $\lambda$.
\item $Z_J^q(\lambda)$ has a unique maximal submodule and its unique
  irreducible quotient is isomorphic to $L^q(\lambda)$, the
  irreducible $\Uq(\La)$ module with highest weight $\lambda$.
\item Every simple object in $\CatO^q_J$ is isomorphic to
  $L^q(\lambda)$ for some $\lambda \in X_J^+$.
\end{enumerate}

Specializing to $q=1$ (cf.\
\cite[Ch.\ 3]{HK}), we have an
identification of the character of $L^q(\lambda)$:
\begin{equation} \Label{E:SimpleCharactersAgree}
\ch L^q(\lambda) = \ch L(\lambda),
\end{equation}
where $L(\lambda)$ denotes the irreducible $\mathbb{U}(\La)$ module
with highest weight $\lambda$. (Note that this is not neccesarily
finite dimensional since we only require $\lambda \in X_J^+$.)

The Category $\CatO_{J}^q$ is a full subcategory of $\CatO^q$
(cf. \cite[\S 3.2]{HK}).  A crucial fact used in the sequel is that
the linkage principle holds in Category $\CatO^q_J$ since it holds in
$\CatO^q$ (cf. \cite[Claim 6.26]{jan:quantum}). In particular we use
the following fact. Consider a weight $\nu \in X_J^+$. If
$Z^q_J(\nu)$ has $L^q(\mu)$ as a composition factor then $\nu = w
\cdot \mu$ for some $w \in {^J}W$.

\subsection{Projective modules in $\CatO^q_J$}

We have a decomposition of Category $\CatO^q_J$ into infinitesimal
blocks $$\CatO_J^q=\bigoplus_{\mu \in\Lsa^*/W}\CatO_J^q(\mu ).$$
In this section, let $Z(\lambda) := Z_J^q(\lambda)$ for brevity.

\begin{prop}\label{thm:EPProp} The following hold:
\begin{enumerate}
\item[(a)]
    If $\lambda \in \Lsa^*$ is dominant, then $Z(\lambda)$ is
    projective.
\item[(b)]
    If $P\in \CatO_J^q$ is projective and $\dim L<\infty $,
    then $P\otimes L$ is projective in $\CatO_J^q$.
\item[(c)] Category $\CatO_J^q$ has enough projectives.
\end{enumerate}
\end{prop}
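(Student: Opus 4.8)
The plan is to adapt the standard Bernstein--Gelfand--Gelfand/Rocha-Caridi arguments for parabolic category $\CatO$ to the quantum setting, using only the formal properties already established (highest weight structure, the linkage principle, and the tensor identity for induced modules). I would prove the three parts in the order (a), (b), (c), since (c) is deduced from (a) and (b).

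For part (a), suppose $\lambda \in \Lsa^*$ is dominant. The key point is that $Z(\lambda) = \Uq(\La)\otimes_{\Uq(\pj)} L^q_J(\lambda)$ is an induced module, so $\Hom_{\CatO^q_J}(Z(\lambda), -)$ is, by Frobenius reciprocity, the composite of the (exact) restriction functor to $\Uq(\pj)$-modules followed by $\Hom_{\Uq(\pj)}(L^q_J(\lambda), -)$. Thus it suffices to show the latter functor is exact on (the $\Uq(\pj)$-restrictions of) objects of $\CatO^q_J$. Since $L^q_J(\lambda)$ is a finite dimensional simple $\Uq(\lj)$-module on which $\BU_q(\mathfrak u_J)$ acts trivially, a $\Uq(\pj)$-homomorphism out of it is the same as an $\lj$-homomorphism into the $\BU_q(\mathfrak u_J)$-invariants; exactness then reduces to the claim that for a short exact sequence $0\to A\to B\to C\to 0$ in $\CatO^q_J$, the sequence of $\lj$-socles in the relevant weight spaces stays exact. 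This is where dominance of $\lambda$ enters: by the linkage principle in $\CatO^q_J$, if $L^q_J(\lambda)$ appears in the head of the $\pj$-restriction of some object then the relevant extension is controlled by $\Ext^1$ between parabolic Verma modules $Z(w\cdot\lambda)$, and $\Ext^1_{\CatO^q_J}(Z(\lambda), Z(\mu)) = 0$ whenever $\lambda$ is dominant (equivalently, maximal in its linkage class) because no $\mu = w\cdot\lambda$ with $w\in {}^JW$, $w\ne e$, satisfies $\mu > \lambda$. I would formulate this vanishing as the real technical lemma and prove it by a weight/linkage argument exactly as in \cite{RC}.

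For part (b), the argument is the standard one: if $P$ is projective in $\CatO^q_J$ and $\dim L < \infty$, then for any object $V$ one has the natural isomorphism $\Hom_{\CatO^q_J}(P\otimes L, V)\cong \Hom_{\CatO^q_J}(P, V\otimes L^*)$, using that $L$ is finite dimensional so that $-\otimes L$ has the exact left and right adjoint $-\otimes L^*$ and that $\CatO^q_J$ is closed under tensoring with finite dimensional modules (which follows from the two defining conditions: tensoring with a finite dimensional $\Uq(\lj)$-module preserves the $X_J^+$-semisimplicity, and the weight condition is clearly stable). Then $\Hom_{\CatO^q_J}(P\otimes L, -)$ is exact because it is the composite of the exact functor $-\otimes L^*$ with $\Hom_{\CatO^q_J}(P,-)$.

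For part (c), I would combine (a) and (b): given any $\lambda \in X_J^+$, choose a dominant weight $\mu \in \Lsa^*$ and a finite dimensional $\Uq(\La)$-module $L$ such that $\lambda$ is a weight of $Z(\mu)\otimes L$ of the correct form (e.g.\ take $\mu$ dominant and $L$ with extremal weight $\lambda - \mu$, mimicking the classical choice); then $Z(\mu)\otimes L$ is projective by (a) and (b), and standard highest-weight arguments show that the block component of $Z(\mu)\otimes L$ containing $L^q(\lambda)$ in its head surjects onto $L^q(\lambda)$. Running over all $\lambda\in X_J^+$ gives enough projectives. The main obstacle I anticipate is part (a): one must make sure that the linkage principle as quoted (a statement about composition factors of $Z^q_J(\nu)$) is strong enough to yield the $\Ext^1$-vanishing between parabolic Verma modules, which in the classical case uses the block decomposition together with the ordering on weights within a block; transporting this to the quantum category requires knowing that the blocks $\CatO^q_J(\mu)$ have finite-length objects and that BGG reciprocity-type bounds hold, so some care is needed to cite or re-derive those structural facts in the quantum setting.
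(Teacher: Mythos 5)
Your parts (b) and (c) follow essentially the same route as the paper: (b) is the identical adjunction argument, and for (c) the paper makes your choice of auxiliary data explicit (take $\mu=\lambda+n\rho$ dominant and $L=L^q(n\rho)$, so that $-n\rho=\lambda-\mu$ is the lowest weight of $L$ and the tensor identity exhibits $Z(\lambda)$ as a quotient of $Z(\mu)\otimes L^q(n\rho)$); it then finishes ``enough projectives'' by induction on composition length of an arbitrary object, a step you should state explicitly, since producing a projective cover of each simple does not by itself give enough projectives without that induction.

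The genuine issue is in your part (a). Reducing projectivity of $Z(\lambda)$ to the vanishing of $\Ext^1_{\CatO_J^q}(Z(\lambda),Z(\mu))$ for parabolic Verma modules $Z(\mu)$ is not sufficient: projectivity requires $\Ext^1_{\CatO_J^q}(Z(\lambda),V)=0$ for \emph{every} object $V$, and a general $V$ --- even a simple module --- admits no filtration by parabolic Verma modules, so there is no way to bootstrap from extensions between Verma modules alone. (From $0\to K\to Z(\mu)\to L^q(\mu)\to 0$ the long exact sequence gives $\Ext^1(Z(\lambda),Z(\mu))\to\Ext^1(Z(\lambda),L^q(\mu))\to\Ext^2(Z(\lambda),K)$, so the surjection runs the wrong way.) What you actually need is $\Ext^1(Z(\lambda),L^q(\mu))=0$ for all $\mu$, together with finite length of objects in a block, and that vanishing is proved by the same weight/linkage argument you invoke. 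The paper avoids all of this machinery by arguing directly with the lifting problem: given $\varphi:Z(\lambda)\to N$ and $\pi:M\twoheadrightarrow N$, lift $\varphi(v^+)$ to $v\in M$ (after reducing to the block of $\lambda$); any maximal weight vector of $\Uq(\mathfrak{n}^+)v$ has weight $\mu\ge\lambda$ with $\mu=w\cdot\lambda$ by linkage, hence $\mu=\lambda$ by dominance, so $v$ is itself a maximal vector and the universal property of $Z(\lambda)$ produces the lift $\widetilde\varphi$. This uses only the linkage principle exactly as quoted and sidesteps the structural facts (finite length in blocks, BGG-reciprocity-type bounds) that you correctly identified as the obstacles to your homological formulation.
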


\textit{Proof.} The proof follows the line of reasoning given for ordinary Category $\CatO$ in \cite{Hum}. 
For completeness, we include the argument here.

(a) Start with an exact sequence
$M\longrightarrow N\longrightarrow 0$ in $\CatO_{J}^{q}$. Suppose $\varphi :Z(\lambda
)\longrightarrow N$ is a $\mathcal{U}_q(\La)$-module homomorphism.
We want a map $\widetilde\varphi:Z(\lambda )\longrightarrow M$ such
that
\begin{equation} \Label{E:ProjDiagram}
 \xymatrix{
& Z(\lambda) \ar@{-->}[dl]_{\tilde{\varphi }} \ar[d]^{\varphi } & \\
M \ar@{->>}[r]_{\pi } & N \ar[r] & 0 }
\end{equation}
commutes. Since $Z(\lambda )$ is a highest
weight module, it has a highest weight vector $v^+$, of weight
$\lambda $. Consider $\varphi (v^+)\in N$. The map
$\pi:M\longrightarrow N$ is surjective, so there exists $v \in M$
such that $\pi(v) = \varphi (v^+)$. Now, $Z(\lambda) \in
{\CatO_J^q}(\lambda)$. Furthermore,
$$N=\bigoplus_{\mu \in\Lsa^*/W}N^\mu$$ with $N^\mu \in \CatO_J^q(\mu)$,
and $\varphi (Z(\lambda ))\subseteq N^\lambda$. So without loss of
generality, we may suppose $M,N\in {\CatO_J^q}(\lambda) $. The
$\mathcal{U}_q (\mathfrak{n}^+)$-submodule of $M$ generated by $v$  has
a maximal weight vector of weight $\mu \geq \lambda $.  By the
Linkage Principle, $\mu =w \cdot \lambda$ for some $w \in W$. Thus
$\mu =\lambda$, since $\lambda $ is dominant. By the universal property
of $Z(\lambda )$, there exists $\widetilde\varphi:Z(\lambda
)\longrightarrow M$ such that
$\widetilde\varphi(v^+)=v$ and \eqref{E:ProjDiagram} commutes. Hence $Z(\lambda )$ is
projective.

(b) By the definition of Category $\CatO_J^q$, it is easy to see that
$V\otimes L\in \CatO_J^q$ for any $V\in\CatO_{J}^{q}$ and any finite
dimensional module $L$. In particular, $-\otimes L$ (and similarly
$L\otimes -$) are exact functors on $\CatO_{J}^{q}$. Since $P$ is
projective if and only if $\Hom_{\CatO_J^q}(P,-)$ is an exact functor,
$\Hom_{\CatO_J^q}(P,\Hom(L,-))\cong\Hom_{\CatO_{J}^{q}}(P,L^{*}\otimes
-)$ is the composition of two exact functors, hence exact.
Furthermore, $$\Hom_{\CatO_J^q}(P\otimes
L,-)\cong\Hom_{\CatO_J^q}(P,\Hom(L,-)).$$ Hence
$\Hom_{\CatO_J^q}(P\otimes L,-)$ is exact, therefore $P\otimes L$ is
projective in $\CatO_J^q$.

(c) We prove that if $M\in \CatO_J^q$, then there exists
a projective module $P\in \CatO_J^q$ such that $P\twoheadrightarrow
M$.  We proceed by induction on the length of $M$.
For the base step, for any $\lambda \in X_{J}^{+}$, we will find a
projective $P\in \CatO_J^q$ such that $P\twoheadrightarrow L^{q}(\lambda
)$. Let $\mu :=\lambda +n\rho$ for $n\geq 0$ large enough that $\mu$
is dominant; in particular, $\mu\in X_{J}^{+}$. Then the Verma module $Z(\mu)$ is projective by
part (a). Since $n\rho \in X^+$, $L^{q}(n\rho )$
is finite dimensional. Hence $P:=Z(\mu)\otimes L^{q}(n\rho )$ is
projective in $\CatO_{J}^{q}$ by part (b).

The standard argument using the tensor identity shows that if $\dim L<\infty $, 
then $Z(\xi )\otimes L$ has a
finite Verma filtration with subquotients of the form $Z(\xi +\nu
)$, where $\nu $ ranges over $\wt(L)$. Moreover, if $\nu $
is a minimal (respectively, maximal) weight of $L$, then $Z(\xi
+\nu)$ is a quotient (resp., submodule) of $Z(\xi)\otimes L$. The
lowest weight of $L^{q}(n\rho )$ is $-n\rho=w_0(n\rho)$, so
$Z(\mu-n\rho)=Z(\lambda)$ is a quotient of $P$. Consequently,
$$P\twoheadrightarrow Z(\lambda )\twoheadrightarrow L^{q}(\lambda )$$
since $L^{q}(\lambda)$ is a quotient of $Z(\lambda)$.

For the inductive step, suppose $l(M)> 1$ so that for some simple
module $L^{q}(\lambda)$ ($\lambda\in X_{J}^{+}$) we have a short exact sequence
$$0\longrightarrow L(\lambda)\longrightarrow M\longrightarrow N\longrightarrow 0.$$
Now, $l(N)<l(M)$ and so by our induction hypothesis, there exists a
projective module $Q\in\CatO_J^q$ such that we have a map
$\varphi:Q\twoheadrightarrow N$.  Consequently, there is a map
$\widetilde{\varphi}:Q\rightarrow M$ such that
$$ \xymatrix{
0 \ar[r] & L^{q}(\lambda) \ar[r] & M \ar[r]^{\pi } & N \ar[r] & 0 \\
& & & Q \ar@{->>}[ul]^{\tilde{\varphi }} \ar[u]_{\varphi } }
$$
commutes.  If $\widetilde{\varphi}$ is surjective, we are done.  If
$\widetilde\varphi $ is not surjective, then $M\cong N\oplus
L^{q}(\lambda )$.  Consequently, taking $P\in\CatO_J^q$ such that $P$ is
a projective cover of $L^{q}(\lambda)$, we have that $Q\oplus
P\twoheadrightarrow M$. $\Box$

\subsection{Relative Cohomology} In this subsection we outline basic definitions and results for relative cohomology .
 Let
$$ \cdots\rightarrow M_{i-1}\stackrel{f_{i-1}}{\longrightarrow} M_i\stackrel{f_i}{\rightarrow} M_{i+1}\rightarrow \cdots$$
be a sequence of $\mathcal{U}_q(\La)$-modules.  We say this sequence
is $(\mathcal{U}_q(\La),\mathcal{U}_q(\lj))$-\emph{exact} if it is exact as
a sequence of $\mathcal{U}_q(\La)$-modules and if, when viewed as a sequence of $\mathcal{U}_q(\lj)$-modules, Ker$f_i$ is a direct
summand of $M_i$ for all $i$.

A $\mathcal{U}_q(\La)$-module $P$ is $(\mathcal{U}_q(\La),\mathcal{U}_q(\lj))$-\emph{projective} if given any
$(\mathcal{U}_q(\La),\mathcal{U}_q(\lj))$-exact sequence
$$0\rightarrow M_1 \stackrel{f}{\rightarrow} M_2\stackrel{g}{\rightarrow} M_3\rightarrow 0,$$
and $\mathcal{U}_q(\La)$-module homomorphism $h:P\rightarrow M_3$ there is a $\mathcal{U}_q(\La)$-homomorphism $\tilde h : P\rightarrow M_2$ satisfying
$g\circ \tilde h=h$.

In particular, note that if $P$ is a projective $\mathcal{U}_q(\La)$-module, then it is automatically $(\mathcal{U}_q(\La),\mathcal{U}_q(\lj))$-projective.

A $(\mathcal{U}_q(\La),\mathcal{U}_q(\lj))$-\emph{projective resolution} of a $\mathcal{U}_q(\La)$-module is a
$(\mathcal{U}_q(\La),\mathcal{U}_q(\lj))$-exact sequence
$$\cdots\stackrel{\delta_2}{\rightarrow} P_1\stackrel{\delta_1}\rightarrow P_0\stackrel{\delta_0}{\rightarrow} M\rightarrow 0,$$
where each $P_i$ is a $(\mathcal{U}_q(\La),\mathcal{U}_q(\lj))$-projective module. We record a lemma whose 
proof is the same as in \cite{Kum}.

\begin{lem} The following statements hold:
\begin{enumerate}
\item[(a)]If $M$ is any $\mathcal{U}_q(\lj)$ module, then $\mathcal{U}_q(\La)\otimes _{\mathcal{U}_q(\lj)}M$ is
a $(\mathcal{U}_q(\La),\mathcal{U}_q(\lj))$-projective
 $\mathcal{U}_q(\La)$-module.
\item[(b)] Any $\mathcal{U}_q(\La)$-module admits a $(\mathcal{U}_q(\La),\mathcal{U}_q(\lj))$-projective resolution. Namely,
\begin{equation*}\cdots\stackrel{\delta_2}{\rightarrow} \mathcal{U}_q(\La)\otimes _{\mathcal{U}_q(\lj)} \Ker  \delta _0\stackrel{\delta_1}{\rightarrow}
\mathcal{U}_q(\La)\otimes _{\mathcal{U}_q(\lj)}M \stackrel
{\delta_0}{\rightarrow} M\rightarrow 0.
\end{equation*}Here $\delta_i$ is the ``multiplication'' map $\mathcal{U}_q(\La)\otimes _{\mathcal{U}_q(\lj)}N\rightarrow N$
given by $x\otimes n\mapsto xn$
for any $\mathcal{U}_q(\La)$-module $N$.
\end{enumerate}
\end{lem}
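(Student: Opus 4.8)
The plan is to prove both parts essentially simultaneously, following the standard construction of relative (Lie algebra) cohomology adapted to the quantum setting, exactly as in Kumar's treatment. The key structural input is that $\mathcal{U}_q(\La)$ is free as a right $\mathcal{U}_q(\lj)$-module, which follows from the PBW basis described in Section~\ref{PBW}: a PBW basis of $\mathcal{U}_q(\La)$ can be arranged so that the monomials in the $E_\gamma, F_\gamma$ for $\gamma \in \Phi^+ \smallsetminus \Phi_J^+$ and the $K_\alpha^{\pm 1}$ (for part of the torus) give a basis of a complement to $\mathcal{U}_q(\lj)$. Granting this, for part (a) I would show that $\mathcal{U}_q(\La) \otimes_{\mathcal{U}_q(\lj)} M$ is $(\mathcal{U}_q(\La),\mathcal{U}_q(\lj))$-projective by the usual adjunction/averaging argument: given a $(\mathcal{U}_q(\La),\mathcal{U}_q(\lj))$-exact sequence $0 \to M_1 \xrightarrow{f} M_2 \xrightarrow{g} M_3 \to 0$ and a $\mathcal{U}_q(\La)$-map $h : \mathcal{U}_q(\La)\otimes_{\mathcal{U}_q(\lj)} M \to M_3$, one uses that $g$ splits as a map of $\mathcal{U}_q(\lj)$-modules (this is precisely what relative exactness provides) to lift $h|_{1\otimes M}$ to a $\mathcal{U}_q(\lj)$-map $M \to M_2$, and then extends by freeness (equivalently, by Frobenius reciprocity $\Hom_{\mathcal{U}_q(\La)}(\mathcal{U}_q(\La)\otimes_{\mathcal{U}_q(\lj)} M, -) \cong \Hom_{\mathcal{U}_q(\lj)}(M, -)$) to the desired $\mathcal{U}_q(\La)$-map $\tilde h$. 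The compatibility $g \circ \tilde h = h$ is checked on the generating subspace $1\otimes M$ and then follows on all of $\mathcal{U}_q(\La)\otimes_{\mathcal{U}_q(\lj)} M$ since all maps are $\mathcal{U}_q(\La)$-linear.

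For part (b), I would first observe that the multiplication map $\delta_0 : \mathcal{U}_q(\La)\otimes_{\mathcal{U}_q(\lj)} M \to M$, $x\otimes m \mapsto xm$, is a surjective $\mathcal{U}_q(\La)$-module homomorphism for any $\mathcal{U}_q(\La)$-module $M$ (surjectivity from $1\otimes m \mapsto m$). The crucial point is that $\delta_0$ is split as a map of $\mathcal{U}_q(\lj)$-modules: the $\mathcal{U}_q(\lj)$-linear section is $m \mapsto 1\otimes m$, and one checks via the PBW decomposition that $\Ker \delta_0$ is a $\mathcal{U}_q(\lj)$-module complement, so the sequence $0 \to \Ker\delta_0 \to \mathcal{U}_q(\La)\otimes_{\mathcal{U}_q(\lj)} M \to M \to 0$ is $(\mathcal{U}_q(\La),\mathcal{U}_q(\lj))$-exact. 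Then one iterates: apply the same construction to $\Ker\delta_0$ in place of $M$, obtaining $\delta_1 : \mathcal{U}_q(\La)\otimes_{\mathcal{U}_q(\lj)} \Ker\delta_0 \to \Ker\delta_0$, compose with the inclusion $\Ker\delta_0 \hookrightarrow \mathcal{U}_q(\La)\otimes_{\mathcal{U}_q(\lj)} M$ to get the map labeled $\delta_1$ in the statement, and continue inductively. Each term is $(\mathcal{U}_q(\La),\mathcal{U}_q(\lj))$-projective by part (a), and the resulting complex is $(\mathcal{U}_q(\La),\mathcal{U}_q(\lj))$-exact because at each stage the kernel is a $\mathcal{U}_q(\lj)$-direct summand by construction.

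I expect the main obstacle to be verifying the two "splitting over $\mathcal{U}_q(\lj)$" claims cleanly, i.e.\ that $\mathcal{U}_q(\La)$ is free (or at least faithfully flat with a $\mathcal{U}_q(\lj)$-stable complement) as a right $\mathcal{U}_q(\lj)$-module, and correspondingly that $\Ker\delta_0$ is a $\mathcal{U}_q(\lj)$-summand of $\mathcal{U}_q(\La)\otimes_{\mathcal{U}_q(\lj)} M$. In the Lie algebra case this is immediate from the PBW theorem together with the Poincar\'e–Birkhoff–Witt decomposition $\UGg = \UGg\,\mathfrak{u}_J \oplus \BU(\mathfrak{l}_J)$; here one needs the quantum analog, which is exactly the content of the PBW-basis statements cited from \cite{BNPP} in Section~\ref{PBW} (in particular that $\BU_q(\mathfrak{u}_J)$ and $\BU_q(\mathfrak{u}_J^+)$ are subalgebras and that multiplication gives an isomorphism $\BU_q(\mathfrak{u}_J^+) \otimes \BU_q(\mathfrak{l}_J) \otimes \BU_q(\mathfrak{u}_J) \xrightarrow{\sim} \BU_q(\La)$ of vector spaces, compatibly with the $\mathcal{U}_q(\lj)$-bimodule structure on the middle factor). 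Since everything beyond this is formal and identical to \cite{Kum}, I would simply cite that reference for the remaining bookkeeping, as the statement already indicates.
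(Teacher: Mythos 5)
Your argument is correct and is precisely the standard relative-homological-algebra proof that the paper delegates to \cite{Kum}: part (a) follows from the adjunction together with the $\mathcal{U}_q(\lj)$-splitting of $g$ furnished by relative exactness, and part (b) from splicing the multiplication maps, whose $\mathcal{U}_q(\lj)$-linear section $m\mapsto 1\otimes m$ exhibits each kernel as a direct summand. Note only that the PBW/freeness input you flag as the main obstacle is not actually needed for either part: the section $m\mapsto 1\otimes m$ is $\mathcal{U}_q(\lj)$-linear simply because the tensor product is taken over $\mathcal{U}_q(\lj)$, and the well-definedness of the lift in (a) uses only the bimodule structure.
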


Let $\mathcal {C}=\mathcal{C}(\mathcal{U}_q(\La),\mathcal{U}_q(\lj))$ denote the 
full subcategory of all $\mathcal{U}_q(\La)$-modules which are
finitely semisimple as $\mathcal{U}_q(\lj)$-modules. Then the category $\mathcal{C}$ contains $\CatO_{J}^q$.
A key connection between these categories is that if $M$ and $N$ are objects of
$\CatO_{J}^q$, then one has
$$\Ext_{\CatO_{J}^q}^\bullet(M,N)\cong  \Ext_{(\mathcal{U}_q(\La),\mathcal{U}_q(\lj))}^\bullet(M,N) \cong\Ext_{\mathcal{C}}^\bullet(M,N). $$
(cf. \cite{Kum} and \cite{BNW}).

\subsection{Composition Factors}

If $V$ is a finitely semisimple $\Uq(\levi_J)$-module, write
$[V:L_{J}^q(\nu)]_{\Uq(\levi_J)}$ for the multiplicity of
$L_{J}^q(\nu)$ as a $\Uq(\levi_J)$-composition factor of $V$. The following 
theorems provide information about the composition factors inside the cohomology. 

\begin{thm} (cf.\  \cite[2.3.1]{UGAVIGRE}) \Label{T:UpperBound}
Let $V\in {\mathcal O}_{J}^q$ and let $\lambda\in X_{J}^{+}$.
\begin{itemize}
\item[(a)] $\operatorname{Ext}^{i}_{{\mathcal O}_{J}^q}
(Z_{J}^q(\lambda),V)\cong \operatorname{Hom}_{\Uq(\levi_{J})}
(L_{J}^q(\lambda),\operatorname{H}^{i}(\Uq(\mathfrak{u}_{J}),V))$
\item[(b)] If $[\operatorname{H}^{i}(\Uq(\mathfrak{u}_{J}),L^q(\mu))
:L_{J}^q(\nu)]_{\Uq(\levi_J)}\neq 0$ for $\mu\in X^{+}$, then
$\nu=w\cdot \mu$ for some $w\in {^J}W$.
\end{itemize}
\end{thm}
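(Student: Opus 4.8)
For part (a), the plan is to use the Shapiro-type / relative-cohomology adjunction. Since $Z_J^q(\lambda) = \Uq(\La)\otimes_{\Uq(\pj)} L_J^q(\lambda)$ and $L_J^q(\lambda)$ is inflated from $\Uq(\levi_J)$ along $\Uq(\pj)\twoheadrightarrow\Uq(\levi_J)$, I would identify
$$\operatorname{Ext}^\bullet_{\CatO_J^q}(Z_J^q(\lambda),V)\cong \operatorname{Ext}^\bullet_{(\Uq(\La),\Uq(\levi_J))}(\Uq(\La)\otimes_{\Uq(\pj)}L_J^q(\lambda),\,V),$$
using the Ext-comparison recorded just before this theorem, and then rewrite the right-hand side by a change-of-rings / Frobenius reciprocity to land in the category of $\Uq(\pj)$-modules: one obtains $\operatorname{Ext}^\bullet_{(\Uq(\pj),\Uq(\levi_J))}(L_J^q(\lambda),V)$. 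Because $L_J^q(\lambda)$ is a simple $\Uq(\levi_J)$-module and hence a relatively projective object for the pair $(\Uq(\pj),\Uq(\levi_J))$ on the side where $\Uq(\levi_J)$ acts semisimply, the higher Ext's over $\Uq(\pj)$ collapse onto the $\Uq(\nil_J)$-invariants: the relative cohomology is computed by the complex $\operatorname{Hom}_{\Uq(\levi_J)}(L_J^q(\lambda),\operatorname{Hom}_{\C}(\Lambda^\bullet(\Uq(\nil_J)/\cdots),V))$, whose cohomology is $\operatorname{Hom}_{\Uq(\levi_J)}(L_J^q(\lambda),\operatorname{H}^\bullet(\Uq(\nil_J),V))$. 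Concretely I would take the $(\Uq(\La),\Uq(\levi_J))$-projective resolution of $Z_J^q(\lambda)$ built from $\Uq(\La)\otimes_{\Uq(\levi_J)}-$ terms, apply $\operatorname{Hom}_{\Uq(\La)}(-,V)$, use the tensor-hom adjunction to rewrite each term as $\operatorname{Hom}_{\Uq(\levi_J)}(\,\cdot\,, V)$, and recognize the resulting complex as the one computing $\operatorname{H}^\bullet(\Uq(\nil_J),V)$ with its residual $\Uq(\levi_J)$-action, then take $\Uq(\levi_J)$-homomorphisms from $L_J^q(\lambda)$. This is exactly the argument of \cite[2.3.1]{UGAVIGRE} transported to the quantum setting, and the only thing to check is that the formal ingredients — exactness of $-\otimes L$ on $\CatO_J^q$, the relative-projectivity lemma above, and the Ext-comparison — are all available here, which they are.

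For part (b), I would argue by a Verma-filtration / linkage bookkeeping exactly as in the classical case. Fix $\mu\in X^+$ so $L^q(\mu)$ is finite dimensional, and suppose $[\operatorname{H}^i(\Uq(\nil_J),L^q(\mu)):L_J^q(\nu)]_{\Uq(\levi_J)}\neq 0$ for some $\nu\in X_J^+$. By part (a) applied with $\lambda$ chosen dominant and far enough inside the dominant chamber (so that $Z_J^q(\lambda)$ is projective in $\CatO_J^q$ by Proposition \ref{thm:EPProp}(a)), nonvanishing of the relevant $\operatorname{Hom}_{\Uq(\levi_J)}$ forces $L_J^q(\nu)$ to occur in $\operatorname{H}^\bullet(\Uq(\nil_J),L^q(\mu))$ only when $L^q(\mu)$ appears as a composition factor of an appropriate parabolic Verma module $Z_J^q(\nu)$ — more precisely, the argument is: since $\CatO_J^q$ has enough projectives and these projectives have $\Delta$-filtrations, $\operatorname{Ext}^\bullet_{\CatO_J^q}(Z_J^q(\lambda),L^q(\mu))\neq 0$ only if $Z_J^q(\lambda)$ and $L^q(\mu)$ lie in the same infinitesimal block, hence $\lambda$ and $\mu$ are $W$-linked, and a BGG-reciprocity / block argument pins down that the only $\nu$ that can contribute are those with $L^q(\mu)$ a composition factor of $Z_J^q(\nu)$. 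By the linkage principle recorded in the excerpt (``if $Z^q_J(\nu)$ has $L^q(\mu)$ as a composition factor then $\nu = w\cdot\mu$ for some $w\in{}^JW$''), this yields $\nu = w\cdot\mu$ with $w\in{}^JW$, as claimed.

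The main obstacle I expect is part (a): one must be careful that the change-of-rings isomorphism and the collapse of relative Ext over $\Uq(\pj)$ actually hold in $\CatO_J^q$ rather than in the full module category, i.e. that all the intermediate modules ($\Uq(\La)\otimes_{\Uq(\levi_J)}(\text{kernels})$, the Koszul-type terms $\operatorname{Hom}_\C(\Lambda^\bullet, V)$) genuinely lie in the categories where the Ext-comparison with $\operatorname{Ext}_{(\Uq(\La),\Uq(\levi_J))}$ is valid, and that $\Uq(\nil_J)=\Uq(\mathfrak u_J)$ is normal-enough inside $\Uq(\pj)$ for $\operatorname{H}^\bullet(\Uq(\nil_J),-)$ to carry a well-defined $\Uq(\levi_J)$-action — this is where the PBW-subalgebra structure from Section \ref{PBW} and \cite[Lemma 2.4.1]{BNPP} is essential. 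Once these structural points are in place the computation is formal, and part (b) is then a routine linkage argument citing Proposition \ref{thm:EPProp} and the quantum linkage principle. Since the statement is explicitly flagged as ``cf.\ \cite[2.3.1]{UGAVIGRE},'' I would in practice simply verify line-by-line that the classical proof goes through verbatim once $\BU(\mathfrak u_J)$ is replaced by $\Uq(\mathfrak u_J)$ and the classical linkage principle by its quantum analog.
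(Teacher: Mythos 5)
Your part (a) is in substance the paper's argument: the Ext-comparison plus Frobenius reciprocity reduces the problem to $\Ext^{i}_{(\Uq(\pj),\Uq(\levi_{J}))}(L^{q}_{J}(\lambda),V)$, and the identification with $\Hom_{\Uq(\levi_{J})}(L^{q}_{J}(\lambda),\operatorname{H}^{i}(\Uq(\nil_{J}),V))$ comes from the fact that $\Hom_{\Uq(\levi_{J})}(L^{q}_{J}(\lambda),-)$ is exact on finitely semisimple $\Uq(\levi_{J})$-modules; the paper packages this as the degeneration of a Lyndon--Hochschild--Serre type spectral sequence for the normal subalgebra $\Uq(\nil_{J})\subseteq\Uq(\pj)$ with $\Uq(\pj)//\Uq(\nil_{J})\cong\Uq(\levi_{J})$, while you propose an explicit relative resolution; either packaging works. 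However, your stated reason for the collapse --- that $L^{q}_{J}(\lambda)$ is a relatively $(\Uq(\pj),\Uq(\levi_{J}))$-projective object --- is wrong as written: if it were relatively projective, then $\Ext^{i}_{(\Uq(\pj),\Uq(\levi_{J}))}(L^{q}_{J}(\lambda),V)$ would vanish for all $i>0$, contradicting the very formula being proved. What does the work is the semisimplicity of the $\Uq(\levi_{J})$-action on the terms of the complex (equivalently, vanishing of higher $(\Uq(\levi_{J}),\Uq(\levi_{J}))$-relative cohomology), together with normality of $\Uq(\nil_{J})$ in $\Uq(\pj)$. Also note there is no literal Chevalley--Eilenberg complex $\Lambda^{\bullet}$ for $\Uq(\nil_{J})$; one should use the relative bar resolution or the spectral sequence. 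These points are fixable and the rest of your outline matches the paper.

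Part (b) has a genuine gap. The paper's proof applies part (a) with $\lambda=\nu$: since $\operatorname{H}^{i}(\Uq(\nil_{J}),L^{q}(\mu))$ is finitely semisimple over $\Uq(\levi_{J})$, the multiplicity equals $\dim\Hom_{\Uq(\levi_{J})}(L^{q}_{J}(\nu),\operatorname{H}^{i}(\Uq(\nil_{J}),L^{q}(\mu)))=\dim\Ext^{i}_{\CatO^{q}_{J}}(Z^{q}_{J}(\nu),L^{q}(\mu))$, and nonvanishing of this Ext forces $Z^{q}_{J}(\nu)$ and $L^{q}(\mu)$ into the same infinitesimal block, whence $\nu=w\cdot\mu$ with $w\in{}^{J}W$ because $\mu\in X^{+}$ and $\nu\in X^{+}_{J}$. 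You instead invoke part (a) with an auxiliary ``sufficiently dominant'' $\lambda$ --- but for such $\lambda$ the module $Z^{q}_{J}(\lambda)$ is projective, all higher Ext vanish, and nothing is learned about $\nu$ --- and then assert that Ext-nonvanishing forces $L^{q}(\mu)$ to be a composition factor of $Z^{q}_{J}(\nu)$. That implication is false in general: already for $\mathfrak{sl}_{2}$ one has $\Ext^{1}_{\CatO}(Z(s\cdot 0),L(0))\neq 0$ although $Z(s\cdot 0)=L(s\cdot 0)$ has no composition factor $L(0)$. Ext-nonvanishing only gives membership in the same linkage class, and that is all you need; the detour through composition factors of $Z^{q}_{J}(\nu)$ and BGG reciprocity is both unjustified and unnecessary, and the argument as written does not establish the claim without the direct application of (a) at $\lambda=\nu$.
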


\begin{proof} (a) First observe that $\Ext^{i}_{{\mathcal O}_{J}^q}
(Z_{J}^q(\lambda),V)\cong \Ext^{i}_{(\Uq(\La),\Uq(\levi_{J}))}
(Z_{J}^q(\lambda),V)$ and by Frobenius reciprocity we have
\begin{equation*}
\Ext^{i}_{(\Uq(\La),\Uq(\levi_{J}))} (Z_{J}^q(\lambda),V)\cong
\Ext^{i}_{(\Uq(\p_{J}),\Uq(\levi_{J}))}(L_{J}^q(\lambda),V)\\
\cong
\operatorname{H}^{i}(\Uq(\p_{J}),\Uq(\levi_{J});L_{J}^q(\lambda)^{*}\otimes
V).
\end{equation*}

One can use the Grothendieck spectral sequence construction given in
\cite[I Proposition 4.1]{jan:rep} to obtain a spectral sequence,
{\setlength\arraycolsep{2pt}\begin{eqnarray*}
E_{2}^{i,j}&=&\operatorname{H}^{i}(\Uq(\p_{J})//\Uq(\nil_{J}),
\Uq(\levi_{J})/\Uq(\levi_{J} \cap \nil_{J});
\operatorname{H}^{j}(\Uq(\nil_{J}),0;L_{J}^q(\lambda)^{*}\otimes
V)) \\  &\Rightarrow&
\operatorname{H}^{i+j}(\Uq(\p_{J}),\Uq(\levi_{J});L_{J}^q(\lambda)^{*}\otimes
V)
\end{eqnarray*}}
(see \cite[Sec.\ 5.2]{GK} for the definition of $//$).
However,  $E_{2}^{i,j}\cong
\operatorname{H}^{i}(\Uq(\levi_{J}),\Uq(\levi_{J});
\operatorname{H}^{j}(\Uq(\levi_{J}),0;
L_{J}^q(\lambda)^{*}\otimes V))=0$ for $i>0$, so the spectral
sequence collapses and yields
{\setlength\arraycolsep{2pt}\begin{eqnarray*}
\text{Hom}_{\Uq(\levi_{J})}(L_{J}^q(\lambda),\operatorname{H}^{j}(\Uq(\nil_{J}),V))
&\cong&
\text{H}^{0}(\Uq(\levi_{J}),\Uq(\levi_{J});\operatorname{H}^{j}(\Uq(\nil_{J}),L_{J}^q(\lambda)^{*}\otimes
V) \\ &\cong&
\operatorname{H}^{j}(\Uq(\p_{J}),\Uq(\levi_{J});L_{J}^q(\lambda)^{*}\otimes
V).
\end{eqnarray*}}  

(b) Suppose that
$[\operatorname{H}^{i}(\Uq(\mathfrak{u}_{J}),L^q(\mu))
:L_{J}^q(\nu)]_{\Uq(\levi_J)}\neq 0$. Then from part (a),
{\setlength\arraycolsep{2pt}\begin{eqnarray*}
[\operatorname{H}^{i}(\Uq(\mathfrak{u}_{J}),L^q(\mu))
:L_{J}^q(\nu)]_{\Uq(\levi_J)} &=& \dim \Hom_{\Uq(\levi_{J})}
(L_{J}^q(\nu),\operatorname{H}^{i}(\Uq(\mathfrak{u}_{J}),L^q(\mu))
\\ &=& \dim \Ext^{i}_{{\mathcal O}_{J}^q}
(Z_{J}^q(\nu),L^q(\mu)).
\end{eqnarray*}}But, $\Ext^{i}_{{\mathcal O}_{J}^q}
(Z_{J}^q(\nu),L^q(\mu))\neq 0$ implies by linkage that $\nu = w\cdot
\mu$ for some $w\in {^J}W$.
\end{proof}

\begin{thm} (cf.\  \cite[2.5.1]{UGAVIGRE}) \Label{T:RelatingCFs}
Let $V$ be a finite dimensional $\Uq(\p_J)$-module. If
$$[ \operatorname{H}^i( \Uq(\nil_J), V ) \colon L^q(\lambda) ]_{\Uq(\lj)} \ne 0$$
then,
$$[ \operatorname{H}^i( \Uq(\nil_J), \C ) \otimes V \colon L^q(\lambda)
]_{\Uq(\lj)} \ne 0 .$$
\end{thm}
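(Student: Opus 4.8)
The plan is to pass to the cohomology of the \emph{small} quantum nilpotent algebra $u_q(\nil_J)$, where a Hochschild--Serre--type decomposition lets us split off the trivial coefficients, and then transfer the resulting multiplicity statement back up to $\Uq(\nil_J)$. First I would note that since $V$ is finite dimensional, all the weights of $V$ lie in a bounded set, and the $\Uq(\lj)$-module structure on $\operatorname{H}^i(\Uq(\nil_J),V)$ is the one induced from the adjoint action of $\Uq(\pj)$ on $\Uq(\nil_J)$ together with the given action on $V$. The key structural input is that $\Uq(\nil_J)$, as an algebra with $\Uq(\lj)$-action, behaves like the universal enveloping algebra of an abelian-by-nilpotent object: concretely, I would use the PBW basis from Section~\ref{PBW} to realize a Koszul-type (or bar-type) complex computing $\operatorname{H}^\bullet(\Uq(\nil_J),-)$ whose terms are $\Uq(\lj)$-modules built from the ``root space'' span $\Uq(\nil_J)_{+}/(\Uq(\nil_J)_{+})^2$, which as an $\Uq(\lj)$-module is (a $q$-deformation of) $\bigoplus_{\gamma\in\Phi^+\setminus\Phi_J^+}\La_{-\gamma}$, independent of $q$ in the relevant Grothendieck-group sense.

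The main step is a comparison of the two spectral-sequence-type computations. On the one hand, $\operatorname{H}^\bullet(\Uq(\nil_J),\C)$ is computed by the same complex with trivial coefficients; on the other, $\operatorname{H}^\bullet(\Uq(\nil_J),V)$ is computed by tensoring that complex with $V$. Since $-\otimes V$ is exact on $\Uq(\lj)$-modules (finite-dimensional coefficients, as in the proof of Proposition~\ref{thm:EPProp}(b)), I would argue at the level of characters / in the Grothendieck group of finitely semisimple $\Uq(\lj)$-modules: the Euler characteristic (alternating sum of $\operatorname{H}^i$) of the $V$-coefficient complex equals that of the $\C$-coefficient complex tensored with $[V]$. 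This gives the \emph{Euler-characteristic} form of the claim for free. To upgrade from Euler characteristics to a single cohomological degree $i$, I would invoke linkage in $\CatO_J^q$ (via Theorem~\ref{T:UpperBound}) exactly as in \cite[2.5.1]{UGAVIGRE}: the composition factors $L^q_J(\lambda)$ appearing anywhere in $\operatorname{H}^\bullet(\Uq(\nil_J),V)$ all have $\lambda$ linked to a weight of $V$, and the degree in which $L^q_J(\lambda)$ can occur is pinned down — so the alternating sum being nonzero in the $L^q(\lambda)$-isotypic part forces a nonzero contribution in each relevant degree, and in particular degree $i$, on both sides. The cleanest route is: if $[\operatorname{H}^i(\Uq(\nil_J),V):L^q(\lambda)]\neq 0$, then by Theorem~\ref{T:UpperBound}(a) and linkage $\lambda$ is of the form $w\cdot\mu$ for some weight $\mu$ of $V$ and $w\in{}^JW$; one then checks, again via the PBW/Koszul model, that the only way $L^q(\lambda)=L^q(w\cdot\mu)$ can arise in $\operatorname{H}^i$ is from the term $\operatorname{H}^{i}(\Uq(\nil_J),\C)$ in degree contributing $L^q(w\cdot 0)$ twisted into the $\mu$-isotypic piece of $V$, i.e.\ from $\operatorname{H}^i(\Uq(\nil_J),\C)\otimes V$.

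I expect the main obstacle to be making the "tensor with $V$ commutes with taking $\nil_J$-cohomology up to the $\Uq(\lj)$-structure" statement rigorous in the quantum setting: classically this is the tensor identity $\operatorname{H}^\bullet(\nil_J,V)\cong \operatorname{H}^\bullet(\nil_J,\C)\otimes V$ as $\levi_J$-modules when $V$ is a $\p_J$-module on which $\nil_J$ acts — but here one must be careful because $\Uq(\nil_J)$ is not a normal subalgebra in a naive sense and the coproduct is not cocommutative, so the $\Uq(\lj)$-action on the Koszul complex with coefficients in $V$ twists the two factors together rather than splitting them. The resolution is that we do not actually need an isomorphism — only the \emph{inequality of multiplicities} in each degree — so it suffices to exhibit, for each $i$, a $\Uq(\lj)$-module filtration (or just an injection in the Grothendieck group after using linkage to separate blocks) of $\operatorname{H}^i(\Uq(\nil_J),V)$ whose associated graded injects into $\operatorname{H}^i(\Uq(\nil_J),\C)\otimes V$, which the Koszul model supplies by filtering $V$ by weight spaces. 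Everything else — exactness of $-\otimes V$, the linkage bound, Frobenius reciprocity — is already available from Proposition~\ref{thm:EPProp} and Theorem~\ref{T:UpperBound}.
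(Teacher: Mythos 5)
Your plan has a genuine gap at the step where you upgrade from the Euler-characteristic identity to the degree-by-degree multiplicity statement. The assertion that ``the degree in which $L^q_J(\lambda)$ can occur is pinned down'' is not available at this point: for trivial coefficients it follows from weight considerations on $\Lambda^\bullet_{q,J}$, but for a general finite dimensional $\Uq(\p_J)$-module $V$ (which need not be a $\Uq(\La)$-module, so Theorem~\ref{T:UpperBound}(b) does not apply) nothing prevents the same $L^q_J(\lambda)$ from occurring in several degrees and cancelling in the alternating sum; so a nonzero multiplicity in $\operatorname{H}^i$ neither implies nor is implied by a nonzero contribution to the Euler characteristic. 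Worse, pinning down the degree is essentially the content of Kostant's theorem, which is proved later \emph{using} the present statement, so that route is circular. Your fallback --- a filtration of the Koszul complex obtained ``by filtering $V$ by weight spaces'' --- also fails as stated, because the weight spaces of $V$ are not $\Uq(\lj)$-submodules (the generators $E_\alpha,F_\alpha$ for $\alpha\in J$ move weights), so they do not induce a filtration by subcomplexes of $\Uq(\lj)$-modules.

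The missing idea is much simpler than the machinery you set up. Filter $V$ by a composition series of $\Uq(\p_J)$-submodules and induct on its length using the long exact sequence in $\Uq(\nil_J)$-cohomology; this reduces the claim to the case of a \emph{simple} $\Uq(\p_J)$-module $V$. For simple $V$ the tensor identity you were worried about holds on the nose for an elementary reason: $\Uq(\nil_J)$ is a normal subalgebra of $\Uq(\p_J)$ and its only irreducible representation is the trivial one, so the nonzero space of $\Uq(\nil_J)_+$-invariants of $V$ is a $\Uq(\p_J)$-submodule and hence all of $V$. Thus $\Uq(\nil_J)$ acts trivially on $V$, giving $\operatorname{H}^i(\Uq(\nil_J),V)\cong\operatorname{H}^i(\Uq(\nil_J),\C)\otimes V$ directly, with no Euler characteristics, linkage, or Koszul model needed.
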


\begin{proof}
  Consider the case when $V$ is simple. Let $\Uq(\nil_J)_+$ denote
  the augmentation ideal of $\Uq(\nil_J)$ as in \cite[2.7]{BNPP}. Then
  $\Uq(\nil_J)_+$ acts by zero on $V$. This is a consequence of
  two properties. First, $\Uq(\nil_J)$ is a normal subalgebra of
  $\Uq(\pj)$ (cf.\  \cite[Corollary 2.7A]{BNPP}). Second, $\Uq(\nil_J)$
  has only one irreducible representation, the trivial module.  Taking
  $\Uq(\nil_J)_+$ invariants, we have a non-zero (because of the
  second property) subspace of $V$ which is invariant under
  $\Uq(\pj)$ (by the first property). Thus $\Uq(\nil_J)_+$ must act by
  zero on all of $V$.

  The trivial action on $V$ implies that $$\operatorname{H}^i(\Uq(\nil_J), V)
  \cong \operatorname{H}^i(\Uq(\nil_J), \C) \otimes V .$$ Thus the statement holds
  in this case. The general argument follows by induction on the
  length of a composition series for $V$ and the long exact sequence
  in cohomology as in \cite[Proposition 2.5.1]{UGAVIGRE}.
\end{proof}

\section{Euler Characters} \Label{S:EulerCharacters}

\subsection{} Let $M$ be a module in $\CatO^q_J$. From \cite[Proposition 2.9.1(b)]{BNPP} and
Theorem \ref{T:RelatingCFs}, 
$$\text{H}^{n}({\mathcal U}_{q}({\mathfrak u}_{J}),M)=0$$ for $n>\dim {\mathfrak u}_{J}$. 
Set
$$\chi(M)=\sum_{n=0}^{\dim {\mathfrak u}_{J}} (-1)^{n}\, \text{ch }\text{H}^{n}({\mathcal U}_{q}({\mathfrak u}_{J}),M).$$
If $M$ has a trivial ${\mathcal U}_{q}({\mathfrak u}_{J})$-action then
\begin{equation} \Label{E:ChiTrivialAction}
\chi(M)=\sum_{n=0}^{\dim {\mathfrak u}_{J}} (-1)^{n}\, \text{ch }(\text{H}^{n}({\mathcal U}_{q}({\mathfrak u}_{J}),{\mathbb C})\otimes M).
\end{equation}
Now observe that if $0\rightarrow M_{1} \rightarrow M \rightarrow M_{2} \rightarrow 0$ is
a short exact sequence of ${\mathcal U}_{q}({\mathfrak p}_{J})$-modules where
$M_{1}$ and $M_{2}$ have trivial ${\mathcal U}_{q}({\mathfrak u}_{J})$-action then
\begin{eqnarray*}
\chi(M)&=&\chi(M_{1})+\chi(M_{2})\\
       &=&\sum_{n=0}^{\dim {\mathfrak u}_{J}} (-1)^{n}\, \text{ch }(\text{H}^{n}({\mathcal U}_{q}({\mathfrak u}_{J}),{\mathbb C})\otimes M_{1})
+\sum_{n=0}^{\dim {\mathfrak u}_{J}} (-1)^{n}\, \text{ch }(\text{H}^{n}({\mathcal U}_{q}({\mathfrak u}_{J}),{\mathbb C})\otimes M_{2})\\
&=&\sum_{n=0}^{\dim {\mathfrak u}_{J}} (-1)^{n}\, \text{ch }(\text{H}^{n}({\mathcal U}_{q}({\mathfrak u}_{J}),{\mathbb C})\otimes M).
\end{eqnarray*}
By inducting on the composition length of $M$ and using the fact that all simple
${\mathcal U}_{q}({\mathfrak p}_{J})$-modules are obtained by inflating simple
${\mathcal U}_{q}({\mathfrak l}_{J})$-modules (by letting ${\mathcal U}_{q}({\mathfrak u}_{J})$ act
trivially), the formula \eqref{E:ChiTrivialAction} holds for all $M$. In particular,
\begin{equation}\Label{euler-irreducible}
\chi(L^{q}(\lambda))=\sum_{n=0}^{\dim {\mathfrak u}_{J}} (-1)^{n} \text{ch }\left(\text{H}^{n}({\mathcal U}_{q}({\mathfrak u}_{J}),{\mathbb C})
\otimes L^{q}(\lambda)\right).
\end{equation}

\subsection{} \Label{SS:EulerComparison} We will now compare Euler characterstics in the quantum and classical
cases. First observe that from \eqref{euler-irreducible} and \cite[Proposition 2.9.1(a)]{BNPP},
\begin{equation} \Label{E:chiLq}
\chi(L^{q}(\lambda))=\sum_{n=0}^{\dim {\mathfrak u}_{J}} (-1)^{n} [\text{ch }\text{H}^{n}({\mathcal U}_{q}({\mathfrak u}_{J}),{\mathbb C})]
[\text{ch }L^{q}(\lambda)]= \sum_{n=0}^{\dim {\mathfrak u}_{J}} (-1)^{n} [\text{ch }\Lambda_{q,J}^{n}][\text{ch }L^{q}(\lambda)].
\end{equation} See \cite[\S 2.9]{BNPP} for the definition of the quantum exterior algebra $\Lambda^{\bullet}_{q,J}$.

On the other hand, the analogous formula holds for the classical case:
\begin{equation*}
\chi(L(\lambda))=\sum_{n=0}^{\dim {\mathfrak u}_{J}} (-1)^{n} [\text{ch }\text{H}^{n}({\mathfrak u}_{J},{\mathbb C})]
[\text{ch }L(\lambda)]
= \sum_{n=0}^{\dim {\mathfrak u}_{J}} (-1)^{n} [\text{ch }\Lambda^{n}({\mathfrak u}_{J}^{*})][\text{ch }L(\lambda)].
\end{equation*}
It follows immediately from the definitions that for each $n$, 
\begin{equation} \Label{E:ExteriorCharacters}
\text{ch }\Lambda_{q,J}^{n} = \text{ch }
\Lambda^{n}({\mathfrak u}_{J}^{*}).
\end{equation}
Recalling \eqref{E:SimpleCharactersAgree}, we have $[\text{ch }\Lambda_{q,J}^{n}][\text{ch }L^{q}(\lambda)]=[\text{ch }
\Lambda^{n}({\mathfrak u}_{J}^{*})][\text{ch }L(\lambda)]$. Therefore,
\begin{equation}\label{comparison}
\sum_{n=0}^{\dim {\mathfrak u}_{J}} (-1)^{n} \text{ch }\text{H}^{n}({\mathcal U}_{q}({\mathfrak u}_{J}),L^{q}(\lambda))=
\sum_{n=0}^{\dim {\mathfrak u}_{J}} (-1)^{n} \text{ch }\text{H}^{n}({\mathfrak u}_{J},L(\lambda)).
\end{equation}

Our strategy will be to use the fact that the right hand side of \eqref{comparison} is given by the classical Kostant's Theorem.

\section{Kostant's Theorem} \Label{S:Kostant}

\subsection{} In this section we will prove an analog of Kostant's theorem for
quantum groups. We begin by proving the result for trivial coefficients.

\begin{thm} \Label{T:KostantK}
Let $J\subseteq \Delta$ and $q$ be a generic parameter. Then as a ${\mathcal U}_{q}({\mathfrak l}_{J})$-module,
$$\operatorname{H}^{n}({\mathcal U}_{q}({\mathfrak u}_{J}),{\mathbb C})
\cong \bigoplus_{\substack{w\in \WJ\\ l(w)=n}} L_{J}^{q}(w\cdot 0).$$
\end{thm}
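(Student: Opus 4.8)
The plan is to prove the isomorphism by a ``squeeze'' argument, combining the upper bound on composition factors coming from linkage in parabolic Category $\CatO^q_J$ with the lower bound (really, an exact count) coming from the Euler characteristic comparison with the classical case. First I would apply Theorem~\ref{T:UpperBound}(b) with $\mu = 0$: since $L^q(0) = \mathbb{C}$, any $L^q_J(\nu)$ appearing as a $\mathcal{U}_q(\mathfrak{l}_J)$-composition factor of $\operatorname{H}^n(\mathcal{U}_q(\mathfrak{u}_J),\mathbb{C})$ must satisfy $\nu = w\cdot 0$ for some $w \in \WJ$. Because the orbit $\{w\cdot 0 : w\in \WJ\}$ consists of distinct weights (the stabilizer of $\rho$ in $W$ is trivial and $\WJ$ gives distinct cosets), these possible composition factors are pairwise non-isomorphic, so we get
$$\operatorname{H}^n(\mathcal{U}_q(\mathfrak{u}_J),\mathbb{C}) = \bigoplus_{w\in\WJ} m_{w,n}\, L^q_J(w\cdot 0)$$
for some multiplicities $m_{w,n}\in\mathbb{N}$, with the additional constraint that $m_{w,n} = 0$ unless $l(w) = n$, provided one also shows a grading/degree constraint. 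That last point I would get by invoking the classical Kostant theorem together with the character identity: by \eqref{E:SimpleCharactersAgree} and \eqref{E:ExteriorCharacters} one already knows $\operatorname{ch}\Lambda^\bullet_{q,J}$ agrees with $\operatorname{ch}\Lambda^\bullet(\mathfrak{u}_J^*)$, hence passing through the Euler characteristic in \eqref{comparison} with $\lambda = 0$ gives
$$\sum_{n=0}^{\dim\mathfrak{u}_J} (-1)^n \operatorname{ch}\operatorname{H}^n(\mathcal{U}_q(\mathfrak{u}_J),\mathbb{C}) = \sum_{n=0}^{\dim\mathfrak{u}_J}(-1)^n \operatorname{ch}\operatorname{H}^n(\mathfrak{u}_J,\mathbb{C}) = \sum_{w\in\WJ}(-1)^{l(w)} \operatorname{ch} L_J(w\cdot 0),$$
the last equality being classical Kostant. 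Using \eqref{E:SimpleCharactersAgree} again, $\operatorname{ch}L_J(w\cdot 0) = \operatorname{ch}L^q_J(w\cdot 0)$, so the alternating sum of the quantum cohomology characters equals $\sum_{w\in\WJ}(-1)^{l(w)}\operatorname{ch}L^q_J(w\cdot 0)$.

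Now I would combine the two facts. From the upper bound, $\operatorname{H}^n(\mathcal{U}_q(\mathfrak{u}_J),\mathbb{C})$ is a sum of $L^q_J(w\cdot 0)$'s with $w\in\WJ$, so $\sum_n (-1)^n \operatorname{ch}\operatorname{H}^n$ is an integer combination $\sum_{w\in\WJ} c_w \operatorname{ch}L^q_J(w\cdot 0)$ where $c_w = \sum_n (-1)^n m_{w,n}$. Comparing with the Euler characteristic computed above, and using that the characters $\{\operatorname{ch}L^q_J(w\cdot 0): w\in\WJ\}$ are linearly independent (distinct highest weights), we get $c_w = (-1)^{l(w)}$ for each $w$, i.e. $\sum_n (-1)^n m_{w,n} = (-1)^{l(w)}$. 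To upgrade this alternating-sum identity to the termwise statement $m_{w,n} = \delta_{n,l(w)}$, I would bring in a cohomological degree/weight bound. The cleanest route is to note that $\operatorname{H}^n(\mathcal{U}_q(\mathfrak{u}_J),\mathbb{C})$ is a subquotient of $\operatorname{Hom}_{\mathcal{U}_q(\mathfrak{u}_J)}(\Lambda^n_{q,J}, \mathbb{C})$, equivalently carries only weights of the form $-(\gamma_1 + \dots + \gamma_n)$ with the $\gamma_i$ distinct positive roots in $\Phi^+\setminus\Phi^+_J$; in particular the $\mathfrak{t}$-weight $\mu$ of any weight vector satisfies $\langle \mu + \rho, \varpi\rangle$-type bounds forcing $w\cdot 0$ with $l(w) = n$ to be the only admissible weight in degree $n$ — precisely the content of the classical Kostant weight-length matching. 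So $m_{w,n} \ne 0$ forces $l(w) = n$, and then $\sum_n(-1)^n m_{w,n} = (-1)^{l(w)} m_{w,l(w)} = (-1)^{l(w)}$ gives $m_{w,l(w)} = 1$.

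The main obstacle I expect is this last upgrade from the Euler characteristic (alternating sum) to the graded (termwise) statement: linkage alone cannot separate the degrees, since $w\cdot 0$ with various $l(w)$ are all linked. The resolution must use that $\operatorname{H}^n$ lives on $\Lambda^n$ of the $n$-dimensional space $\mathfrak{u}_J$ and hence its weights are sums of exactly $n$ (distinct) positive roots outside $\Phi^+_J$ — a fact available here from \cite[\S 2.9]{BNPP} and the character agreement \eqref{E:ExteriorCharacters} — combined with the classical fact (a lemma of Kostant) that $w\cdot 0$, $w\in\WJ$, is a sum of $\ell(w)$ such roots and this characterizes $\ell(w) = n$. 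Once that degree constraint is in hand, everything collapses to the clean formula. I would also double-check the edge case that $L^q_J(w\cdot 0)$ is genuinely finite-dimensional and a legitimate object — this follows because $w\cdot 0 \in X^+_J$ for $w\in\WJ$, a standard fact about minimal coset representatives.
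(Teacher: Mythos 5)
Your proposal is correct and follows essentially the same route as the paper's proof: the linkage upper bound from Theorem~\ref{T:UpperBound}, the Euler characteristic comparison \eqref{comparison} with classical Kostant, and the degree separation via the fact that the weights of $\operatorname{H}^{n}({\mathcal U}_{q}({\mathfrak u}_{J}),{\mathbb C})$ are weights of $\Lambda^{n}_{q,J}$, where $(\Lambda^{n}_{q,J})_{w\cdot 0}$ is zero unless $l(w)=n$ and one-dimensional when $l(w)=n$. The only cosmetic difference is that the paper cites \cite[Proposition 2.9.1]{BNPP} and \cite[Lemma 3.1.2]{UGAVIGRE} for the weight--length matching that you rederive by hand, and it explicitly invokes semisimplicity of the cohomology as a ${\mathcal U}_{q}({\mathfrak l}_{J})$-module to pass from multiplicities to the direct sum, which you assume implicitly at the outset.
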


\begin{proof} According to Theorem \ref{T:UpperBound}, all ${\mathcal U}_{q}({\mathfrak l}_{J})$-composition
factors of $\operatorname{H}^{n}({\mathcal U}_{q}({\mathfrak u}_{J}),{\mathbb C})$ are
of the form $L_{J}^{q}(w\cdot 0)$ where $w\in \WJ$. Furthermore, by
\cite[Proposition 2.9.1]{BNPP}, if $w\cdot 0$ is a weight of
$\operatorname{H}^{n}({\mathcal U}_{q}({\mathfrak u}_{J}),{\mathbb C})$ then
$w\cdot 0$ is a weight of $\Lambda^{n}_{q,J}$.
Recall (e.g., from \eqref{E:ExteriorCharacters} and \cite[Lemma 3.1.2]{UGAVIGRE}) that $\dim (\Lambda^{n}_{q,J})_{w\cdot 0}=0$ for $l(w)\neq n$ and
$(\Lambda^{n}_{q,J})_{w\cdot 0}\cong k$ for $l(w)=n$. 

Now by the classical Kostant's Theorem (cf.\  \cite[Theorem 4.2.1]{UGAVIGRE}), 
$\text{ch L}_{q}(w\cdot 0)$
appears exactly once in $\sum_{n=0}^{\dim {\mathfrak u}_{J}} (-1)^{n} \text{ch }\Lambda_{q,J}^{n}$;
thus, it must appear exactly once in $\sum_{n=0}^{\dim {\mathfrak u}_{J}} (-1)^{n} \text{ch }\text{H}^{n}({\mathcal U}_{q}({\mathfrak u}_{J}),{\mathbb C})$.
But by the previous paragraph, $[\operatorname{H}^{n}({\mathcal U}_{q}({\mathfrak u}_{J}),{\mathbb C}):L_{J}^{q}(w\cdot 0)]=0$
for $l(w)\neq n$, thus $\text{ch }L^{q}_{J}(w\cdot 0)$ appears exactly once in
$\text{ch }\operatorname{H}^{l(w)}({\mathcal U}_{q}({\mathfrak u}_{J}),{\mathbb C})$ and
$[\operatorname{H}^{l(w)}({\mathcal U}_{q}({\mathfrak u}_{J}),{\mathbb C}):L_{J}^{q}(w\cdot 0)]=1$.
The result now follows because of the semisimplicity of the cohomology as
${\mathcal U}_{q}({\mathfrak l}_{J})$-module.
\end{proof}

\subsection{} We can now apply the preceding theorem to compute the
cohomology of ${\mathcal U}_{q}({\mathfrak u}_{J})$ with coefficients
in a finite dimensional simple ${\mathcal U}_{q}({\mathfrak g})$-module.

\begin{thm} \Label{T:KostantL}
Let $J\subseteq \Delta$, $\mu\in X^{+}$ and $q$ a generic parameter.
Then as a ${\mathcal U}_{q}({\mathfrak l}_{J})$-module,
$$\operatorname{H}^{n}({\mathcal U}_{q}({\mathfrak u}_{J}),L^{q}(\mu))
\cong \bigoplus_{\substack{w\in \WJ\\ l(w)=n}} L^{q}_{J}(w\cdot \mu).$$
\end{thm}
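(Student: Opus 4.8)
The plan is to combine the ``upper bound'' on composition factors coming from linkage (Theorem~\ref{T:UpperBound}) with the ``lower bound'' coming from the Euler characteristic comparison \eqref{comparison}, exactly as was done for trivial coefficients in Theorem~\ref{T:KostantK}. First I would invoke Theorem~\ref{T:UpperBound}(b): every ${\mathcal U}_q({\mathfrak l}_J)$-composition factor of $\operatorname{H}^n({\mathcal U}_q({\mathfrak u}_J),L^q(\mu))$ is of the form $L_J^q(w\cdot\mu)$ with $w\in\WJ$. Moreover, since $L^q(\mu)$ is finite dimensional, the cohomology is finite dimensional, so only finitely many such $w$ can occur, and the cohomology is semisimple as a ${\mathcal U}_q({\mathfrak l}_J)$-module. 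Thus it suffices to pin down the multiplicities $[\operatorname{H}^n({\mathcal U}_q({\mathfrak u}_J),L^q(\mu)):L_J^q(w\cdot\mu)]$, and to show they are $1$ when $l(w)=n$ and $0$ otherwise.

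Next I would bring in the classical side. By the classical Kostant Theorem (cf.\ \cite[Theorem 4.2.1]{UGAVIGRE}) one has $\operatorname{H}^n({\mathfrak u}_J,L(\mu))\cong\bigoplus_{w\in\WJ,\,l(w)=n}L_J(w\cdot\mu)$ as ${\mathfrak l}_J$-modules, so the right-hand side of \eqref{comparison} equals $\sum_{w\in\WJ}(-1)^{l(w)}\,\ch L_J(w\cdot\mu)$. By \eqref{comparison} the same alternating sum equals $\sum_{n}(-1)^n\ch\operatorname{H}^n({\mathcal U}_q({\mathfrak u}_J),L^q(\mu))$ on the quantum side. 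Using \eqref{E:SimpleCharactersAgree} to identify $\ch L_J^q(w\cdot\mu)=\ch L_J(w\cdot\mu)$, I get the identity of virtual ${\mathcal U}_q({\mathfrak l}_J)$-characters
$$\sum_{n=0}^{\dim{\mathfrak u}_J}(-1)^n\,\ch\operatorname{H}^n({\mathcal U}_q({\mathfrak u}_J),L^q(\mu))=\sum_{w\in\WJ}(-1)^{l(w)}\,\ch L_J^q(w\cdot\mu).$$
The key point making this usable is that the characters $\{\ch L_J^q(w\cdot\mu):w\in\WJ\}$ are linearly independent: each such simple highest weight $w\cdot\mu$ for distinct $w\in\WJ$ is distinct (the $\rho$-shifted $\WJ$-orbit of $\mu$ has full size since $\mu$ is dominant, so no two coincide and no two are related by $W_J$). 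Hence I can read off, for each fixed $w$, that $\sum_n(-1)^n[\operatorname{H}^n({\mathcal U}_q({\mathfrak u}_J),L^q(\mu)):L_J^q(w\cdot\mu)]=(-1)^{l(w)}$.

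Finally I would upgrade this alternating-sum statement to a degreewise statement. Here I use the ``degree rigidity'' built into the upper bound: by the argument in Theorem~\ref{T:KostantK}, a weight $w\cdot 0$ can only contribute to $\operatorname{H}^n$ of ${\mathbb C}$ in degree $n=l(w)$, and via Theorem~\ref{T:RelatingCFs} every composition factor $L_J^q(w\cdot\mu)$ of $\operatorname{H}^n({\mathcal U}_q({\mathfrak u}_J),L^q(\mu))$ forces $L_J^q(w\cdot 0)$ to be a composition factor of $\operatorname{H}^n({\mathcal U}_q({\mathfrak u}_J),{\mathbb C})$ (tensoring with the finite-dimensional $L^q(\mu)$ cannot change the cohomological degree in which a given linkage class appears), whence $n=l(w)$ by Theorem~\ref{T:KostantK}. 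Therefore for each $w\in\WJ$ the multiplicity $[\operatorname{H}^n({\mathcal U}_q({\mathfrak u}_J),L^q(\mu)):L_J^q(w\cdot\mu)]$ vanishes unless $n=l(w)$, so the alternating sum $(-1)^{l(w)}$ collapses to a single nonnegative integer multiplicity in degree $l(w)$, forcing it to equal $1$. Combined with semisimplicity this yields the claimed isomorphism. The main obstacle is the degreewise-versus-alternating-sum step: one must argue carefully that tensoring $\operatorname{H}^\bullet({\mathcal U}_q({\mathfrak u}_J),{\mathbb C})$ with $L^q(\mu)$ (which is how $\operatorname{H}^\bullet({\mathcal U}_q({\mathfrak u}_J),L^q(\mu))$ relates to the trivial-coefficient case at the level of linkage classes, via Theorem~\ref{T:RelatingCFs}) cannot shift the degree in which the linkage class of $w\cdot\mu$ appears; once that rigidity is in hand the cancellation cannot occur and the result drops out.
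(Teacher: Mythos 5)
Your proposal is correct and follows essentially the same route as the paper: the upper bound on composition factors from Theorem~\ref{T:UpperBound}, degree rigidity via Theorem~\ref{T:RelatingCFs}, Theorem~\ref{T:KostantK} and the weight-combinatorics step $w=w'$ (which the paper likewise defers to the classical argument in \cite{UGAVIGRE}), and the Euler characteristic comparison \eqref{comparison} to pin the multiplicities. Your minor reorganization --- extracting multiplicity exactly one directly from the Euler characteristic once the degree concentration is known, rather than first proving the bound $\le 1$ from the one-dimensionality of the $w\cdot 0$ weight space --- is a legitimate small simplification.
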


\begin{proof} First note that by Theorem \ref{T:UpperBound}, all ${\mathcal U}_{q}({\mathfrak l}_{J})$-composition
factors of $\operatorname{H}^{n}({\mathcal U}_{q}({\mathfrak u}_{J}),L^{q}(\mu))$ are
of the form $L_{J}^{q}(w\cdot \mu)$ where $w\in \WJ$. Next observe
that if $[\operatorname{H}^{n}({\mathcal U}_{q}({\mathfrak u}_{J}),L^{q}(\mu)):L^{q}_{J}(w\cdot \mu)]\neq 0$
then $[\operatorname{H}^{n}({\mathcal U}_{q}({\mathfrak u}_{J}),{\mathbb C})\otimes L^{q}(\mu) :L^{q}_{J}
(w\cdot \mu)]\neq 0$. In other words, 
$$\text{Hom}_{U_{q}({\mathfrak l}_{J})}(L^{q}_{J}(w\cdot \mu),
\operatorname{H}^{n}({\mathcal U}_{q}({\mathfrak u}_{J}),{\mathbb C})\otimes L^{q}(\mu))\neq 0.$$ 

We are now in a position to use the argument in \cite[Theorem 4.2.1]{UGAVIGRE}.
We first apply Frobenius reciprocity and Theorem~\ref{T:KostantK} to deduce that
$$w\cdot \mu=w^{\prime}\cdot 0+\nu,$$ where $\nu\in \wt(L^{q}(\mu))$ and $l(w^{\prime})=n$.
The argument in \cite{UGAVIGRE} shows that $w=w^{\prime}$, thus $l(w)=n$. Furthermore,
$$\operatorname{H}^{n}({\mathcal U}_{q}({\mathfrak u}_{J}),{\mathbb C})_{w\cdot 0}=1.$$
We can now conclude that
\begin{equation}
[\operatorname{H}^{n}({\mathcal U}_{q}({\mathfrak u}_{J}),L^{q}(\mu)):L^{q}_{J}(w\cdot \mu)]\leq 1
\end{equation}
for all $w\in \WJ$.

The cohomology is completely reducible as ${\mathcal U}_{q}({\mathfrak l}_{J})$-module so it
remains to prove that
\begin{equation}
[\operatorname{H}^{n}({\mathcal U}_{q}({\mathfrak u}_{J}),L^{q}(\mu)):L^{q}_{J}(w\cdot \mu)]=1
\end{equation}
for all $w\in \WJ$ and $l(w)=n$. Suppose that $w\in \WJ$ and
$l(w)=n$. By Kostant's Theorem in the classical case, $\text{ch
}L(w\cdot \mu)$ occurs exactly once in $\sum_{i=0}^{\dim {\mathfrak
    u}_{J}} (-1)^{i} \text{ch }\text{H}^{i}({\mathfrak
  u}_{J},L(\mu))$. From (\ref{comparison}), $\text{ch
}L^{q}_{J}(w\cdot \mu)$ appears exactly once in $\sum_{i=0}^{\dim
  {\mathfrak u}_{J}} (-1)^{i} \text{ch }\text{H}^{i}({\mathcal
  U}_{q}({\mathfrak u}_{J}), L^{q}(\lambda))$. From the preceding
paragraph $L^{q}_{J}(w\cdot \mu)$ can only appear as a composition
factor, with multiplicity at most one, in $\text{H}^{i}({\mathcal U}_{q}({\mathfrak
  u}_{J}),L^{q}(\lambda))$ where $i=n=l(w)$. Putting these statements
together implies that $[\operatorname{H}^{n}({\mathcal
  U}_{q}({\mathfrak u}_{J}),L^{q}(\mu)):L^{q}_{J}(w\cdot \mu)]=1$.
\end{proof}

\section{Roots of Unity} \Label{S:RootsOfUnity}

\subsection{} In this section we will demonstrate that Kostant's theorem has a quantum
analog in the case when the parameter is a primitive $l$th root of unity under the condition
that $l\geq h-1$ and the highest weight of the simple ${\mathcal U}_{\zeta}({\mathfrak g})$-module
is in the bottom alcove. These results were inspired by Polo and Tilouine's result for reductive
groups in positive characteristic (cf.\  \cite[Theorem 2.1]{PT}, \cite[Theorem 4.2.1]{UGAVIGRE}).

\subsection{} The first step is to prove an upper bound on the composition factors of the
cohomology groups $\operatorname{H}^{\bullet}({\mathcal U}_{\zeta}({\mathfrak u}_{J}),L^{\zeta}(\mu))$.
In order to do so we need some additional notation. Since $u_{\zeta}({\mathfrak g})$ is a normal
sub Hopf algebra of $U_{\zeta}({\mathfrak g})$, one can form the algebra $Ku_{\zeta}({\mathfrak g})$ where
$K$ is the subalgebra of $U_{\zeta}({\mathfrak g})$ generated by $\{K_{\alpha}^{\pm 1}: \alpha \in \Delta\}$. The same
construction works if one replaces ${\mathfrak g}$ by ${\mathfrak b}$, ${\mathfrak p}_{J}$ or ${\mathfrak l}_{J}$.
This construction is dual to the construction given in \cite[Chapter 9]{PW} and analogous to the graded $G_{1}T$-category
for reductive algebraic group schemes $G$ in positive characteristic. For a fixed $l>1$, let
$W_{l}=W\ltimes l{\mathbb Z}\Phi$ be the affine Weyl group and $\widehat W_{l}=W\ltimes lX$ be the extended
affine Weyl group. Write $X_{1}$ for the set of $l$-restricted weights
$$
X_{1}=\{\, \la\in X : 0 \le \langle \la, \check\alpha \rangle < l \text{ for all } \alpha\in\Delta \,\},
$$
and similarly $(X_{J})_{1}$ for the set of $l$-restricted $J$-weights.

The following theorem provides information about the ${\mathcal U}_{\zeta}({\mathfrak l}_{J})$-composition factors in the
${\mathcal U}_{\zeta}({\mathfrak u}_{J})$-cohomology. 

\begin{thm} \Label{T:Ccompfacts} Let $J\subseteq \Delta$ and $\zeta$ be a primitive 
$l$th root of unity.
\begin{itemize}
\item[(a)] If $[\operatorname{H}^{i}({\mathcal U}_{\zeta}({\mathfrak u}_{J}),L^{\zeta}(\mu))
:L^{\zeta}_{J}(\sigma)]_{{\mathcal U}_{\zeta}({\mathfrak l}_{J})}\neq 0$ where $\mu\in X^{+}$ then $\mu=w\cdot
\sigma$ where $w\in \widehat{W}_{l}$.
\item[(b)] If $[\operatorname{H}^{i}({\mathcal U}_{\zeta}({\mathfrak
u}_{J}),L^{\zeta}(\mu)):L^{\zeta}_{J}(\sigma)]_{{\mathcal U}_{\zeta}(l_{J})}\neq 0$ where $\mu\in X_{1}$ and
$\sigma\in (X_{J})_{1}$ then $\mu=w\cdot \sigma$ where $w\in {W}_{l}$.
\end{itemize}
\end{thm}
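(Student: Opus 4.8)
\textbf{Proof proposal for Theorem \ref{T:Ccompfacts}.}
The plan is to mimic, in the quantum-at-a-root-of-unity setting, the linkage argument used for the generic case (Theorem \ref{T:UpperBound}), but now working relative to the small quantum group and its $K$-enlargement $Ku_\zeta$. First I would set up the relative cohomology machinery exactly as in Section \ref{S:CategoryO}, replacing the pair $(\mathcal{U}_q(\La),\mathcal{U}_q(\lj))$ by $(\mathcal{U}_\zeta({\mathfrak g}),\mathcal{U}_\zeta({\mathfrak l}_J))$ (for part (a)) and by $(U_\zeta({\mathfrak g}),U_\zeta({\mathfrak l}_J))$ or rather the graded $Ku_\zeta$-analogues (for part (b)). The key identity to re-establish is the Frobenius-reciprocity/spectral-sequence computation of Theorem \ref{T:UpperBound}(a), giving
$$\Ext^{i}_{\mathcal{O}_J^{\zeta}}(Z_J^{\zeta}(\sigma),L^{\zeta}(\mu))\cong \Hom_{\mathcal{U}_\zeta({\mathfrak l}_J)}\bigl(L_J^{\zeta}(\sigma),\operatorname{H}^{i}(\mathcal{U}_\zeta({\mathfrak u}_J),L^{\zeta}(\mu))\bigr),$$
so that a nonvanishing composition-factor multiplicity forces a nonzero $\Ext$ between a parabolic Verma module and a simple in a suitable parabolic Category $\mathcal{O}$ for $\mathcal{U}_\zeta$. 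This requires checking that $\mathcal{U}_\zeta({\mathfrak u}_J)$ is still a normal subalgebra of $\mathcal{U}_\zeta({\mathfrak p}_J)$ with the Levi as the quotient, which follows from the PBW construction of Section \ref{PBW} upon specialization, together with the De Concini--Kac type results cited from \cite{BNPP,DK}.

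For part (a), once the $\Ext$-nonvanishing is in hand, I would invoke the linkage principle for Category $\mathcal{O}$ over $\mathcal{U}_\zeta({\mathfrak g})$: the relevant block structure is governed by the dot-action of the \emph{extended affine} Weyl group $\widehat{W}_l = W\ltimes lX$, since $\mathcal{U}_\zeta$ (the De Concini--Kac form, modulo $K_\alpha^l=1$) behaves like the enveloping algebra of the Lie algebra in characteristic $p$, whose linkage is controlled by $\widehat{W}_l$. Concretely, $\Ext^{i}_{\mathcal{O}_J^{\zeta}}(Z_J^{\zeta}(\sigma),L^{\zeta}(\mu))\neq 0$ implies $L^{\zeta}(\mu)$ is linked to $Z_J^{\zeta}(\sigma)$, hence $\mu=w\cdot\sigma$ for some $w\in\widehat{W}_l$. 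For part (b), I would instead work in the graded category attached to $Ku_\zeta({\mathfrak g})$ (the construction flagged in Section \ref{S:RootsOfUnity} as dual to \cite[Chapter 9]{PW} and analogous to $G_1T$-modules): here linkage is by the \emph{non-extended} affine Weyl group $W_l = W\ltimes l{\mathbb Z}\Phi$, and the restriction $\mu\in X_1$, $\sigma\in (X_J)_1$ is exactly what is needed to (i) make sense of $L^\zeta(\mu)$ as a $u_\zeta$-module via the Steinberg-type restriction and (ii) pin the linking element down to $W_l$ rather than $\widehat{W}_l$. So the logical skeleton is: normality of $\nil_J$ $\Rightarrow$ Frobenius reciprocity $\Rightarrow$ $\Ext$-interpretation $\Rightarrow$ linkage in the appropriate Category $\mathcal{O}$/$G_1T$-analogue $\Rightarrow$ the stated dot-orbit conclusion.

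The main obstacle I anticipate is establishing the correct linkage principle in the $Ku_\zeta$-graded setting for part (b) with the precise Weyl group $W_l$ and the precise weight hypotheses. In the generic case one simply quotes \cite{jan:quantum} for the $\mathcal{O}^q$ linkage principle; at a root of unity there is no off-the-shelf ``parabolic Category $\mathcal{O}$ linkage'' statement of exactly this form, so one must either reduce to known linkage for $u_\zeta({\mathfrak g})$-modules (e.g.\ via the block decomposition of $u_\zeta$ and its $Ku_\zeta$-refinement) or re-derive it from the central-character computation for the De Concini--Kac form. A secondary technical point is verifying that the relative $\Ext$ groups in these specialized categories still compute $\operatorname{H}^\bullet(\mathcal{U}_\zeta({\mathfrak u}_J),-)$, i.e.\ that enough projectives/the Shapiro-lemma identifications from the Lemma in Section \ref{S:CategoryO} survive specialization; I expect this to be routine given \cite{Kum,BNW}, but it must be stated carefully since $\mathcal{U}_\zeta({\mathfrak g})$ is no longer of finite global dimension in the naive sense. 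Modulo these two points, parts (a) and (b) run in parallel and the bookkeeping is a direct transcription of the generic argument.
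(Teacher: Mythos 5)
Your overall skeleton (Ext-interpretation of the multiplicity, then linkage) points in the right direction, but the proposal is missing the two ideas that actually make the root-of-unity case work, and the step you flag as ``the main obstacle I anticipate'' is precisely the heart of the paper's proof, not a technicality to be deferred. First, the paper does not invoke any linkage principle for a parabolic Category $\mathcal{O}$ over $\mathcal{U}_\zeta(\mathfrak{g})$ governed by $\widehat{W}_l$; no such off-the-shelf statement is available or used. Instead one first performs a Steinberg-type decomposition $\mu=\mu_0+l\mu_1$, $\sigma=\sigma_0+l\sigma_1$ with $\mu_0\in X_1$, $\sigma_0\in(X_J)_1$, and reduces (as in \cite[Theorem 2.4.1]{UGAVIGRE}) to showing that $L_J^\zeta(\sigma_0)\otimes l\gamma$ occurs in $\operatorname{H}^i(\mathcal{U}_\zeta(\mathfrak{u}_J),L^\zeta(\mu_0))$ as a $Ku_\zeta(\mathfrak{l}_J)$-module for some $\gamma\in X$. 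Second---and this is the step your proposal has no substitute for---one must pass from $\mathcal{U}_\zeta(\mathfrak{u}_J)$-cohomology to $u_\zeta(\mathfrak{u}_J)$-cohomology, because the linkage one can actually quote lives in the $Ku_\zeta(\mathfrak{g})$-category (the $G_1T$-analogue), not in any module category for the De Concini--Kac form. The paper does this via the central subalgebra $\mathcal{Z}_J=\mathcal{U}_\zeta(\mathfrak{u}_J)\cap\mathcal{Z}$, with $\mathcal{Z}$ generated by the $E_\gamma^l$, $F_\gamma^l$, so that $\mathcal{U}_\zeta(\mathfrak{u}_J)//\mathcal{Z}_J\cong u_\zeta(\mathfrak{u}_J)$, and the resulting spectral sequence
$$E_2^{a,b}=\operatorname{H}^a(u_\zeta(\mathfrak{u}_J),L^\zeta(\mu_0))\otimes\operatorname{H}^b(\mathcal{Z}_J,\mathbb{C})\Rightarrow\operatorname{H}^{a+b}(\mathcal{U}_\zeta(\mathfrak{u}_J),L^\zeta(\mu_0)),$$
which simplifies because $\mathcal{Z}_J$ is central, generated by nilpotent elements, and $\operatorname{H}^\bullet(\mathcal{Z}_J,\mathbb{C})$ has all weights in $l\mathbb{Z}\Phi$. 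This is exactly the point the paper identifies as where the argument departs from the Frobenius-kernel case; without it you cannot get from $\mathcal{U}_\zeta$ to a setting with a usable linkage principle.

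Relatedly, your explanation of why (a) yields $\widehat{W}_l$ while (b) yields $W_l$ is not the correct mechanism. In both parts the linkage actually applied is $W_l$-linkage in $Ku_\zeta(\mathfrak{g})$, applied to $\sigma_0+l\gamma-l\delta$ and $\mu_0$ (with $\delta\in\mathbb{Z}\Phi$ coming from $\operatorname{H}^b(\mathcal{Z}_J,\mathbb{C})$, after a coinduction/Lyndon--Hochschild--Serre step). The extended group $\widehat{W}_l$ in (a) appears only because $\sigma$ and $\mu$ differ from $\sigma_0$ and $\mu_0$ by elements of $lX$ (the shifts $l\sigma_1$, $l\mu_1$, $l\gamma$), not because a different category with intrinsically $\widehat{W}_l$-controlled blocks is being used; in (b) the restrictedness hypotheses force $\gamma=0$, $\mu=\mu_0$, $\sigma=\sigma_0$, and the only remaining shift $l\delta$ lies in $l\mathbb{Z}\Phi$, which is why $W_l$ suffices. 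The gap to close is therefore concrete: supply the Steinberg decomposition and the $\mathcal{Z}_J$ spectral sequence; with those in place the remainder of your outline does become a transcription of the classical argument.
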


\begin{proof} We will provide a sketch of the argument which is similar to the one given
in \cite[Theorem 2.4.1]{UGAVIGRE}. It should be noted that one needs to deviate somewhat
from the original proof for Frobenius kernels because a different spectral sequence construction is
needed in the quantum case.

(a) The first part of the proof follows by using the argument provided in \cite[Theorem 2.4.1]{UGAVIGRE}
by replacing $L_{J}$ with ${\mathcal U}_{\zeta}({\mathfrak l}_{J})$ and
$(L_{J})_{1}T$ with the algebra $Ku_{\zeta}({\mathfrak l}_{J})$, etc. The main points are that
if $[\operatorname{H}^{i}({\mathcal U}_{\zeta}({\mathfrak u}_{J}),L^{\zeta}(\mu))
:L^{\zeta}_{J}(\sigma)]_{{\mathcal U}_{\zeta}({\mathfrak l}_{J})}\neq 0$ where $\mu\in X^{+}$ then
one can express $\mu=\mu_{0}+l\mu_{1}$ where $\mu_{0}\in X_{1}$, $\mu_{1}\in X^{+}$, and 
$\sigma=\sigma_{0}+l\sigma_{1}$ where $\sigma_{0}\in (X_{J})_{1}$, $\sigma_{1}\in X_{J}^{+}$.
Then one shows that
$$[\operatorname{H}^{i}({\mathcal U}_{\zeta}({\mathfrak u}_{J}),L^{\zeta}(\mu_{0})):
L_{J}^{\zeta}(\sigma_{0})\otimes l\gamma]_{Ku_{\zeta}({\mathfrak l}_{J})}\neq 0$$ for
some $\gamma\in X$. Here $\gamma$ is the difference between a weight in the simple ${\mathcal U}_{\zeta}(\mathfrak g)$-module $L^{\zeta}(\mu_{1})$ and 
a weight in the simple ${\mathcal U}_{\zeta}(\mathfrak l_{J})$-module $L^{\zeta}_{J}(\sigma_{1})$.  

Here is where the proof now departs from the one given in \cite[Theorem 2.4.1(a)]{UGAVIGRE}. Let
${\mathcal Z}$ be the subalgebra in ${\mathcal U}_{\zeta}({\mathfrak g})$ generated by
$\{E_{\gamma}^{l},F_{\gamma}^{l}: \gamma\in \Phi\}$ and set
${\mathcal Z}_{J}={\mathcal U}_{\zeta}({\mathfrak u}_{J})\cap {\mathcal Z}$.
Then ${\mathcal U}_{\zeta}({\mathfrak u}_{J})//{\mathcal Z}_{J}\cong u_{\zeta}({\mathfrak u}_{J})$. From
\cite[Section 5.4]{BNPP} there exists a spectral sequence:
\begin{equation*}
{E}_{2}^{a,b}=\text{H}^{a}(u_{\zeta}({\mathfrak u}_{J}),\text{H}^{b}({\mathcal Z}_{J},
L^{\zeta}(\mu_{0})))\Rightarrow \text{H}^{a+b}({\mathcal U}_{\zeta}({\mathfrak u}_{J}),L^{\zeta}(\mu_{0})).
\end{equation*}
Since the algebra ${\mathcal Z}_{J}$ is central and generated by nilpotent elements, the action of
${\mathcal Z}_{J}$ on $L^{\zeta}(\mu)$ is trivial. Moreover, the action of $u_{\zeta}({\mathfrak u}_{J})$ on
$\text{H}^{\bullet}({\mathcal Z}_{J},{\mathbb C})$ is trivial. Therefore, one can rewrite the
spectral sequence as
\begin{equation*}
{E}_{2}^{a,b}=\text{H}^{a}(u_{\zeta}({\mathfrak u}_{J}),L^{\zeta}(\mu_{0}))\otimes
\text{H}^{b}({\mathcal Z}_{J},{\mathbb C})\Rightarrow \text{H}^{a+b}({\mathcal U}_{\zeta}({\mathfrak u}_{J}),L^{\zeta}(\mu_{0}))).
\end{equation*}
Now suppose that $[\operatorname{H}^{i}({\mathcal U}_{\zeta}({\mathfrak u}_{J}),L^{\zeta}(\mu_{0})):
L_{J}^{\zeta}(\sigma_{0})\otimes l\gamma]_{Ku_{\zeta}({\mathfrak l}_{J})}\neq 0$. Then
$$\text{Hom}_{Ku_{\zeta}({\mathfrak l}_{J})}(P,\operatorname{H}^{i}({\mathcal U}_{\zeta}({\mathfrak u}_{J}),
L^{\zeta}(\mu_{0})))\neq 0$$ where $P:=P_{J}^{\zeta}(\sigma_{0})\otimes l\gamma$ is the 
$Ku_{\zeta}({\mathfrak l}_{J})$-projective cover of $L_{J}^{\zeta}(\sigma_{0})\otimes l\gamma$. From 
the spectral sequence one can deduce that
$$\text{Hom}_{Ku_{\zeta}({\mathfrak l}_{J})}(P,\text{H}^{a}(u_{\zeta}({\mathfrak u}_{J}),L^{\zeta}(\mu_{0}))\otimes
\text{H}^{b}({\mathcal Z}_{J},{\mathbb C}))\neq 0$$
where $i=a+b$.

The weights of $\text{H}^{\bullet}({\mathcal Z}_{J},{\mathbb C})$ are of the form $l\delta$ where
$\delta\in {\mathbb Z}\Phi$. This implies that
\begin{eqnarray*}
0 & \neq & \dim \text{Hom}_{Ku_{\zeta}({\mathfrak l}_{J})}(P,\text{H}^{a}(u_{\zeta}({\mathfrak u}_{J}),L^{\zeta}(\mu_{0}))\otimes l\delta)\\
& = & [\operatorname{H}^{a}(u_{\zeta}({\mathfrak u}_{J}),L^{\zeta}(\mu_{0})):
L_{J}^{\zeta}(\sigma_{0})\otimes l\gamma\otimes (-l\delta)]_{Ku_{\zeta}({\mathfrak l}_{J})}\\
&=& \dim \text{Ext}^{a}_{Ku_{\zeta}({\mathfrak g})}(\text{coind}_{Ku_{\zeta}({\mathfrak p}_{J})}^{Ku_{\zeta}({\mathfrak g})} P\otimes (-l\delta), L^{\zeta}(\mu_{0})),
\end{eqnarray*}
where the last equality is deduced via an argument using the Lyndon-Hochschild-Serre spectral sequence.

Therefore, the linkage principle for blocks in $Ku_{\zeta}({\mathfrak g})$ implies that
$\sigma_{0}+l\gamma-l\delta$ and $\mu_{0}$ are linked under ${W}_{l}$, thus $\sigma_{0}$ and $\mu_{0}$ are 
linked under $\widehat{W}_{l}$.

(b) Under the hypotheses, we have $\gamma=0$ as in the proof of \cite[Theorem 2.4.1]{UGAVIGRE}. In this
case $\mu=\mu_{0}$ and $\sigma=\sigma_{0}$, and since $\delta\in\Z\Phi$, we deduce that $\mu=w\cdot \sigma$ where $w\in {W}_{l}$.
\end{proof}

\subsection{} One can verify that the proofs of \cite[Proposition 3.5.1, 3.6.1]{UGAVIGRE}
depend only on using weight estimates which apply to our setting.
A conversion of these results to the quantum setting yields the following results.

\begin{prop} \Label{P:weightsbottom} Let $\zeta$ be a primitive $l$th root of unity with $l\ge h-1$.
\begin{itemize}
\item[(a)] Suppose $\sigma=w\cdot 0 + l\mu$ is a weight of
$\Lambda^\bullet_{\zeta,J}$ where $w\in W$ and $\mu\in X$. Then
$\sigma=x\cdot 0$ for some $x\in W$.
\item[(b)] If $w\in \WJ$ then $L^{\zeta}_{J}(w\cdot 0)$ is in the bottom alcove for ${\mathcal U}_{\zeta}(
{\mathfrak l}_{J})$.
\item[(c)] If $w\in \WJ$ and $\lambda\in \overline{C}_{\mathbb Z}\cap X^{+}$ then
$L^{\zeta}_{J}(w\cdot 0)\otimes L^{\zeta}(\lambda)$ is completely
reducible as a ${\mathcal U}_{\zeta}({\mathfrak l}_{J})$-module.
\end{itemize}
\end{prop}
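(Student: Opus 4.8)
\textbf{Proof plan for Proposition~\ref{P:weightsbottom}.}
The strategy is to transplant the arguments of \cite[Propositions 3.5.1, 3.6.1]{UGAVIGRE} verbatim, since (as remarked just before the statement) those proofs only use combinatorial facts about weights — the shape of the weights of the exterior algebra $\Lambda^\bullet_J$, the definition of the bottom alcove, and the Jantzen/Steinberg tensor-product estimates — all of which hold identically in the quantum setting once we invoke \eqref{E:ExteriorCharacters} to identify $\wt\Lambda^\bullet_{\zeta,J}$ with $\wt\Lambda^\bullet(\mathfrak u_J^*)$. I would carry out the three parts in order, since (b) and (c) lean on (a).

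\emph{Part (a).} First I would recall from \eqref{E:ExteriorCharacters} and \cite[Lemma 3.1.2]{UGAVIGRE} that the weights of $\Lambda^\bullet_{\zeta,J}$ are exactly the weights $\nu\cdot 0 = \sum_{\alpha\in S}\alpha$ for subsets $S$ of $\Phi^+\setminus\Phi_J^+$, equivalently the weights $w\cdot 0$ for $w\in W$ together with (in general) further weights all of which lie in the convex hull and satisfy $\langle \nu\cdot 0 + \rho,\check\alpha\rangle$-type bounds coming from the fact that each such weight is a sum of at most $\dim\mathfrak u_J$ roots. The key point is that $\sigma + \rho$ then lies in a bounded region: writing $\sigma = w\cdot 0 + l\mu$, one has $\langle\sigma+\rho,\check\alpha\rangle$ controlled in absolute value by $h-1$ (using $|\langle\rho,\check\alpha\rangle|\le h-1$ and the root-sum bound on $w\cdot 0$), so the hypothesis $l\ge h-1$ forces $\mu$ to be ``small'' in each relevant direction; chasing the inequalities exactly as in \cite{UGAVIGRE} pins $\sigma$ down to be $W$-conjugate to $0$ under the dot action, i.e. $\sigma = x\cdot 0$.

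\emph{Parts (b) and (c).} For (b), given $w\in\WJ$, part (a) (applied with the trivial $l\mu$, or rather the weight estimate itself) together with $w\in{}^JW$ being a \emph{minimal} length coset representative gives that $w\cdot 0$, restricted to the root datum of $\mathfrak l_J$, satisfies $0\le\langle w\cdot 0 + \rho,\check\alpha\rangle< l$ for all $\alpha\in J$ — i.e. $w\cdot 0 + \rho$ is in the fundamental alcove for $W_{l,J}$ — which is precisely the assertion that $L^\zeta_J(w\cdot 0)$ lies in the bottom alcove for $\mathcal U_\zeta(\mathfrak l_J)$; this is the verbatim conversion of \cite[Prop.\ 3.5.1]{UGAVIGRE}. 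For (c), I would combine (b) with the hypothesis $\lambda\in\overline C_{\mathbb Z}\cap X^+$: the tensor product $L^\zeta_J(w\cdot 0)\otimes L^\zeta(\lambda)$ has all its $\mathcal U_\zeta(\mathfrak l_J)$-composition factors of highest weights lying in the closure of the bottom alcove (again a bounded-weight estimate: weights of $L^\zeta(\lambda)$ are $\le\lambda$ and $\ge w_0\lambda$, both in $\overline C_{\mathbb Z}$, and adding $w\cdot 0$ keeps one inside $\overline C$), and a module for $\mathcal U_\zeta(\mathfrak l_J)$ all of whose composition factors lie in the bottom alcove is completely reducible — this is the quantum analogue of the linkage/semisimplicity statement used in \cite[Prop.\ 3.6.1]{UGAVIGRE}, valid because distinct weights in the bottom alcove are not linked under $W_{l,J}$ and $\overline C$ contributes no nontrivial self-extensions.

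\emph{Main obstacle.} The routine inequality-chasing in (a) is not the difficulty; the delicate point is making sure the quantum exterior algebra $\Lambda^\bullet_{\zeta,J}$ genuinely has the same weights as the classical $\Lambda^\bullet(\mathfrak u_J^*)$ at the root of unity — i.e. that \eqref{E:ExteriorCharacters} and \cite[Lemma 3.1.2]{UGAVIGRE} remain valid when $q=\zeta$ rather than generic — and, for (c), that ``all composition factors in $\overline C$ implies completely reducible'' really does hold for $\mathcal U_\zeta(\mathfrak l_J)$ (not merely $u_\zeta(\mathfrak l_J)$), which requires knowing the linkage principle and the vanishing of $\mathrm{Ext}^1$ between simples in the bottom alcove for the De Concini–Kac form. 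I expect both to follow from the cited results with only cosmetic changes, but that is the place where one must be careful rather than formulaic.
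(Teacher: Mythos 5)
Your proposal matches the paper's own treatment: the paper gives no independent argument for this proposition, stating only that the proofs of \cite[Propositions 3.5.1, 3.6.1]{UGAVIGRE} ``depend only on using weight estimates which apply to our setting,'' and your sketch is exactly that transfer, via the identification \eqref{E:ExteriorCharacters} of the weights of $\Lambda^\bullet_{\zeta,J}$ with those of $\Lambda^\bullet(\mathfrak u_J^*)$ followed by the classical inequality-chasing and the linkage/semisimplicity argument in the closed bottom alcove. The two delicate points you flag (validity of the character identification at $q=\zeta$, and complete reducibility for $\mathcal U_\zeta(\mathfrak l_J)$ of modules whose composition factors lie in $\overline{C}_{\mathbb Z}$) are precisely the ones the paper relies on, the first being asserted explicitly in the subsection that follows.
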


We also need to revisit Section \ref{S:EulerCharacters} on Euler characteristics. 
The formula \eqref{E:chiLq} and
\cite[Lemma 2.9.1(a)]{BNPP} hold when $q$ is a root of unity. Therefore,
$$\chi(L^{\zeta}(\lambda))=\sum_{n=0}^{\dim {\mathfrak u}_{J}}(-1)^{n} [\text{ch }\Lambda_{\zeta,J}^{n}]
[\text{ch }L^{\zeta}(\lambda)].$$
Observe that when $\lambda\in \overline{C}_{\mathbb Z}\cap X^{+}$ then for each $n$
\begin{equation}
[\text{ch }\Lambda_{q,J}^{n}][\text{ch }L_{\zeta}(\lambda)]=[\text{ch }
\Lambda^{n}({\mathfrak u}_{J}^{*})][\text{ch }L(\lambda)].
\end{equation}
So it follows the argument in Section \ref{SS:EulerComparison}  that for $\mu\in \overline{C}_{\mathbb Z}\cap X^{+}$,
\begin{equation}
\sum_{n=0}^{\dim {\mathfrak u}_{J}} (-1)^{n} \text{ch }\text{H}^{n}({\mathcal U}_{\zeta}({\mathfrak u}_{J}),L^{\zeta}(\lambda))=
\sum_{n=0}^{\dim {\mathfrak u}_{J}} (-1)^{n} \text{ch }\text{H}^{n}({\mathfrak u}_{J},L(\lambda)).
\end{equation}

\subsection{} We are now in a position to directly apply the proofs
of Theorems~\ref{T:KostantK} and ~\ref{T:KostantL} by setting $q=\zeta$, with
$\zeta$ being a primitive $l$th root of unity where $l\geq h-1$, to prove
a version of Kostant's Theorem in the root of unity case.

\begin{thm} \Label{T:Polo-Tilouine}
Let $J\subseteq \Delta$, $\mu\in X^{+}$ and $\zeta$ be a primitive $l$th
root of unity. Assume that $l\geq h-1$ and $\mu\in \overline{C}_{\mathbb Z}$.
Then as a ${\mathcal U}_{\zeta}({\mathfrak l}_{J})$-module,
$$\operatorname{H}^{n}({\mathcal U}_{\zeta}({\mathfrak u}_{J}),L^{\zeta}(\mu))
\cong \bigoplus_{\substack{w\in \WJ\\ l(w)=n}} L^{\zeta}_{J}(w\cdot \mu).$$
\end{thm}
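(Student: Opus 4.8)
The plan is to mirror the proofs of Theorems~\ref{T:KostantK} and \ref{T:KostantL} verbatim, with $q$ replaced by a primitive $l$th root of unity $\zeta$ (with $l\ge h-1$) and with the linkage input supplied by Theorem~\ref{T:Ccompfacts} rather than by the generic linkage principle. The three ingredients that make this substitution legitimate have all been assembled just above the statement: Proposition~\ref{P:weightsbottom} (which controls the weights of $\Lambda^\bullet_{\zeta,J}$, places $L^\zeta_J(w\cdot 0)$ in the bottom alcove, and guarantees complete reducibility of the relevant tensor products as $\mathcal{U}_\zeta(\mathfrak{l}_J)$-modules), the root-of-unity Euler characteristic comparison $\sum_n(-1)^n\ch\operatorname{H}^n(\mathcal{U}_\zeta(\mathfrak{u}_J),L^\zeta(\mu))=\sum_n(-1)^n\ch\operatorname{H}^n(\mathfrak{u}_J,L(\mu))$ for $\mu\in\overline{C}_{\mathbb Z}\cap X^+$, and the upper bound on composition factors from Theorem~\ref{T:Ccompfacts}. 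Since $\mu$ lies in the bottom alcove, Theorem~\ref{T:Ccompfacts}(b) forces any composition factor $L^\zeta_J(\sigma)$ of $\operatorname{H}^i(\mathcal{U}_\zeta(\mathfrak{u}_J),L^\zeta(\mu))$ to satisfy $\mu=w\cdot\sigma$ for $w\in W_l$, and in the bottom alcove this $W_l$-linkage collapses to the ordinary $W$-linkage $\sigma=w\cdot\mu$ with $w\in\WJ$ — so the classical Kostant weights are exactly the ones allowed.

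First I would establish the trivial-coefficient case: arguing as in Theorem~\ref{T:KostantK}, Theorem~\ref{T:Ccompfacts}(b) together with Proposition~\ref{P:weightsbottom}(a) restricts the $\mathcal{U}_\zeta(\mathfrak{l}_J)$-composition factors of $\operatorname{H}^n(\mathcal{U}_\zeta(\mathfrak{u}_J),\mathbb C)$ to the $L^\zeta_J(w\cdot 0)$ with $w\in\WJ$, and the weight-multiplicity computation for $\Lambda^\bullet_{\zeta,J}$ (via \eqref{E:ExteriorCharacters} and \cite[Lemma 3.1.2]{UGAVIGRE}) shows $(\Lambda^n_{\zeta,J})_{w\cdot 0}\cong k$ exactly when $l(w)=n$ and vanishes otherwise. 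The classical Kostant theorem says $\ch L_J(w\cdot 0)$ occurs exactly once in the alternating sum $\sum_n(-1)^n\ch\Lambda^n_{\zeta,J}$; combining with the Euler characteristic identity and the vanishing of $[\operatorname{H}^n(\mathcal{U}_\zeta(\mathfrak{u}_J),\mathbb C):L^\zeta_J(w\cdot 0)]$ for $l(w)\ne n$ pins the multiplicity to $1$ in degree $l(w)$; complete reducibility (Proposition~\ref{P:weightsbottom}(b) gives the needed bottom-alcove semisimplicity of $\mathcal{U}_\zeta(\mathfrak{l}_J)$-modules here) then yields the direct-sum decomposition.

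Next I would run the argument of Theorem~\ref{T:KostantL} with $\mu\in\overline{C}_{\mathbb Z}\cap X^+$. By Theorems~\ref{T:Ccompfacts} and \ref{T:RelatingCFs}, any composition factor $L^\zeta_J(w\cdot\mu)$ of $\operatorname{H}^n(\mathcal{U}_\zeta(\mathfrak{u}_J),L^\zeta(\mu))$ already appears in $\operatorname{H}^n(\mathcal{U}_\zeta(\mathfrak{u}_J),\mathbb C)\otimes L^\zeta(\mu)$, so $\Hom_{\mathcal{U}_\zeta(\mathfrak{l}_J)}(L^\zeta_J(w\cdot\mu),\operatorname{H}^n(\mathcal{U}_\zeta(\mathfrak{u}_J),\mathbb C)\otimes L^\zeta(\mu))\ne 0$. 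Frobenius reciprocity plus the trivial-coefficient case gives $w\cdot\mu=w'\cdot 0+\nu$ with $\nu\in\wt(L^\zeta(\mu))$ and $l(w')=n$; the weight-theoretic argument of \cite[Theorem 4.2.1]{UGAVIGRE} — which Proposition~\ref{P:weightsbottom}(c) makes available since it only rests on the complete reducibility of $L^\zeta_J(w\cdot 0)\otimes L^\zeta(\mu)$ — forces $w=w'$, hence $l(w)=n$ and the multiplicity is $\le 1$. Finally the root-of-unity Euler characteristic comparison together with classical Kostant shows $\ch L^\zeta_J(w\cdot\mu)$ occurs exactly once in the alternating sum of the $\operatorname{H}^i(\mathcal{U}_\zeta(\mathfrak{u}_J),L^\zeta(\mu))$, and since it can only occur in degree $i=n=l(w)$ the multiplicity there is exactly $1$; completely reducibility finishes the proof.

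The step I expect to be the main obstacle is verifying that the tensor-product/weight manipulation of \cite[Theorem 4.2.1]{UGAVIGRE} really goes through in the root-of-unity setting, because that argument implicitly uses that the relevant $\mathcal{U}_\zeta(\mathfrak{l}_J)$-modules behave like their characteristic-zero analogues — in particular that $L^\zeta_J(w\cdot 0)\otimes L^\zeta(\mu)$ is semisimple and that the $\mathcal{U}_\zeta(\mathfrak{l}_J)$-socle computation is unobstructed. This is exactly the content of Proposition~\ref{P:weightsbottom}(b),(c) and is the reason the hypothesis $\mu\in\overline{C}_{\mathbb Z}$ (bottom alcove) and $l\ge h-1$ cannot be dropped; a secondary point requiring care is checking that Theorem~\ref{T:Ccompfacts}(b)'s $W_l$-linkage genuinely reduces to $\WJ$-linkage once all weights involved are forced into the bottom alcove, so that the classical Kostant weights are the only candidates.
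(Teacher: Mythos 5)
Your proposal is correct and follows essentially the same route as the paper, which simply instructs the reader to rerun the proofs of Theorems~\ref{T:KostantK} and~\ref{T:KostantL} with $q=\zeta$, using Theorem~\ref{T:Ccompfacts} for the linkage upper bound, Proposition~\ref{P:weightsbottom} for the alcove and complete-reducibility inputs, and the root-of-unity Euler characteristic comparison for $\mu\in\overline{C}_{\mathbb Z}\cap X^{+}$. You have in fact supplied more detail than the paper does, and correctly flagged the two places (semisimplicity of $L^{\zeta}_{J}(w\cdot 0)\otimes L^{\zeta}(\mu)$ and the collapse of $W_{l}$-linkage to $\WJ$-linkage in the bottom alcove) where the hypotheses $l\ge h-1$ and $\mu\in\overline{C}_{\mathbb Z}$ are actually used.
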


\subsection{} We remark that one can also prove a partial converse of Kostant's
theorem as demonstrated in \cite[Theorem 5.1.1]{UGAVIGRE}.

\begin{thm} \Label{T:Kostantconverse}
Let $\zeta$ be a primitive $l$th root of unity with $l<h-1$ and $q$ a generic
parameter. Then
$$\operatorname{ch }\operatorname{H}^{\bullet}({\mathcal U}_{\zeta}({\mathfrak u}),{\mathbb C})
\neq \operatorname{ch }\operatorname{H}^{\bullet}({\mathcal U}_{q}({\mathfrak u}),{\mathbb C}).$$
\end{thm}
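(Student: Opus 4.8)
We only need the case $J=\emptyset$; write ${\mathfrak u}={\mathfrak u}_{\emptyset}$, so that ${\mathfrak l}_{\emptyset}={\mathfrak t}$ and the set ${}^{\emptyset}W$ of Theorem~\ref{T:KostantK} is all of $W$. By that theorem, for generic $q$ we have $\operatorname{H}^{n}({\mathcal U}_{q}({\mathfrak u}),{\mathbb C})\cong\bigoplus_{l(w)=n}k_{w\cdot 0}$, so $\ch\operatorname{H}^{\bullet}({\mathcal U}_{q}({\mathfrak u}),{\mathbb C})$ is supported on the $|W|$ pairwise distinct weights $w\cdot 0$, $w\in W$, each with multiplicity one. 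The plan is to exhibit, under the hypothesis $l<h-1$, a single weight $\mu_{0}$ for which $\sum_{n}\dim\operatorname{H}^{n}({\mathcal U}_{\zeta}({\mathfrak u}),{\mathbb C})_{\mu_{0}}>\#\{\,w\in W:w\cdot 0=\mu_{0}\,\}$; in practice the cleanest target is a weight $\mu_{0}\notin W\cdot 0$ occurring in the root-of-unity cohomology, since the generic character is supported on $W\cdot 0$. Two inputs are available on the $\zeta$-side for every $l$: the Euler characteristic identity of Section~\ref{S:EulerCharacters} holds with trivial coefficients, giving $\sum_{n}(-1)^{n}\ch\operatorname{H}^{n}({\mathcal U}_{\zeta}({\mathfrak u}),{\mathbb C})=\sum_{w\in W}(-1)^{l(w)}e^{w\cdot 0}$; and the cohomology in question is computed by a finite complex of finitely generated free ${\mathcal A}$-modules built on $\Lambda^{\bullet}_{q,\emptyset}$ whose ranks are independent of $q$, so upper semicontinuity under the specialization $q\mapsto\zeta$ gives $\dim\operatorname{H}^{n}({\mathcal U}_{\zeta}({\mathfrak u}),{\mathbb C})_{\mu}\ge\dim\operatorname{H}^{n}({\mathcal U}_{q}({\mathfrak u}),{\mathbb C})_{\mu}$ for all $n,\mu$. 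Thus once a weight $\mu_{0}\notin W\cdot 0$ is found in the $\zeta$-cohomology, the Euler identity forces it to appear in at least two cohomological degrees and the discrepancy with the generic character is genuine.

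To produce such a class I would use the spectral sequence of Section~\ref{S:RootsOfUnity}. With ${\mathcal Z}_{\emptyset}={\mathcal U}_{\zeta}({\mathfrak u})\cap{\mathcal Z}$, so that ${\mathcal U}_{\zeta}({\mathfrak u})//{\mathcal Z}_{\emptyset}\cong u_{\zeta}({\mathfrak u})$, one has $E_{2}^{a,b}=\operatorname{H}^{a}(u_{\zeta}({\mathfrak u}),{\mathbb C})\otimes\operatorname{H}^{b}({\mathcal Z}_{\emptyset},{\mathbb C})\Rightarrow\operatorname{H}^{a+b}({\mathcal U}_{\zeta}({\mathfrak u}),{\mathbb C})$, with $u_{\zeta}({\mathfrak u})$ acting trivially on the coefficients and with ${\mathcal Z}_{\emptyset}$ a polynomial algebra, so that $\operatorname{H}^{\bullet}({\mathcal Z}_{\emptyset},{\mathbb C})$ is an exterior algebra on $|\Phi^{+}|$ classes of cohomological degree one whose weights lie in $l{\mathbb Z}\Phi$ and are nonzero in positive degrees. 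Consequently every weight occurring in a row $b>0$ of the $E_{2}$-page differs from a weight of $\operatorname{H}^{a}(u_{\zeta}({\mathfrak u}),{\mathbb C})$ by a nonzero element of $l{\mathbb Z}\Phi$, and such weights lie outside $W\cdot 0$ in general. This is precisely where the hypothesis $l<h-1$ enters: it is the range in which the weight estimate underlying Proposition~\ref{P:weightsbottom}(a) — the estimate that makes Kostant's formula persist when $l\ge h-1$ — no longer holds, so one can select a weight $\sigma$ of $\Lambda^{\bullet}_{\zeta,\emptyset}$ of the form $x\cdot 0+l\nu$ with $\nu\ne 0$ and $\sigma\notin W\cdot 0$. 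Following \cite[Theorem 5.1.1]{UGAVIGRE}, translated to the present spectral sequence, I would realize such a $\sigma$ as the weight of a class in a low row of the $E_{2}$-page and verify that it survives to $\operatorname{H}^{\bullet}({\mathcal U}_{\zeta}({\mathfrak u}),{\mathbb C})$.

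The hard part is exactly this last step — checking that the chosen $E_{2}$-class of weight $\mu_{0}=\sigma\notin W\cdot 0$ is not annihilated by a differential. I expect to handle it as in \cite{UGAVIGRE}: pass to the least cohomological degree in which a weight outside $W\cdot 0$ can occur, where only $d_{2}$ and $d_{3}$ are relevant, and pin down their action on the pertinent weight spaces using the explicit description of $\operatorname{H}^{\le 1}(u_{\zeta}({\mathfrak u}),{\mathbb C})$ (a sum of weight lines indexed by the simple roots) together with the fact that every weight of $\operatorname{H}^{\bullet}({\mathcal Z}_{\emptyset},{\mathbb C})$ lies in $l{\mathbb Z}\Phi$. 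If a direct computation is awkward, a bare nonvanishing statement for one such weight already suffices, since by the Euler identity of the first paragraph it then occurs in two cohomological degrees, contradicting the generic character. Finally, as noted in the proof of Theorem~\ref{T:Ccompfacts}, the spectral sequence used here is not the one from the Frobenius-kernel setting, so it is at this survival step that the argument genuinely departs from \cite{UGAVIGRE}.
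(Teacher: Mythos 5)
The first thing to note is that the paper itself supplies no proof of Theorem \ref{T:Kostantconverse}: it is stated as a remark, with the argument deferred entirely to \cite[Theorem 5.1.1]{UGAVIGRE}. So the question is whether your proposal is a complete argument on its own, and it is not. Your reductions are fine: since $w\mapsto w\cdot 0=w\rho-\rho$ is injective, Theorem \ref{T:KostantK} makes $\ch\operatorname{H}^{\bullet}({\mathcal U}_{q}({\mathfrak u}),{\mathbb C})$ the multiplicity-one sum over $W\cdot 0$, so it suffices to exhibit a single weight $\mu_{0}\notin W\cdot 0$ with $\operatorname{H}^{\bullet}({\mathcal U}_{\zeta}({\mathfrak u}),{\mathbb C})_{\mu_{0}}\neq 0$. (Once you have that, the character inequality is immediate; the Euler identity and the semicontinuity claim are then superfluous --- and the semicontinuity claim is itself only asserted, since you never exhibit the finite complex of free ${\mathcal A}$-modules it would require.)

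The gap is that the proposal never produces such a $\mu_{0}$. Two essential steps are left as declarations of intent. First, the existence for $l<h-1$ of a weight $\sigma=x\cdot 0+l\nu$ of $\Lambda^{\bullet}_{\zeta,\emptyset}$ with $\nu\neq 0$ and $\sigma\notin W\cdot 0$ is the (unstated, unproved) converse of Proposition \ref{P:weightsbottom}(a) and needs its own combinatorial argument; a weight of the form $x\cdot 0+l\nu$ with $\nu\neq 0$ can perfectly well land back in $W\cdot 0$, so ``such weights lie outside $W\cdot 0$ in general'' is not a proof. Second, and decisively, you must show that a nonzero class of that weight actually exists in $\operatorname{H}^{\bullet}({\mathcal U}_{\zeta}({\mathfrak u}),{\mathbb C})$: on the $E_{2}$-page of the ${\mathcal Z}_{\emptyset}$-spectral sequence this requires both that the relevant weight space of $\operatorname{H}^{a}(u_{\zeta}({\mathfrak u}),{\mathbb C})$ be nonzero (this cohomology is not computed in the paper and is well understood only for $l>h$) and that the class survive all differentials. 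You explicitly label this ``the hard part'' and defer it to an adaptation of \cite{UGAVIGRE} that you do not carry out. Since the nonvanishing of some class off $W\cdot 0$ (or, failing that, a multiplicity discrepancy over $W\cdot 0$) is the entire content of the theorem, what you have is a reasonable plan of attack consistent with the machinery of Section \ref{S:RootsOfUnity}, not a proof.
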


As is the case for comparing cohomology for complex semisimple Lie algebras versus their
modular analogs, the comparison between the cohomology for arbitrary unipotent radicals
of parabolic subalgebras and their quantum analogs is much more subtle when $l<h-1$ (cf.\  \cite[\S 6]{UGAVIGRE}).

\section{VIGRE Algebra Group at the University of Georgia}

This project was initiated during Fall Semester 2007 under the Vertical
Integration of Research and Education (VIGRE) Program sponsored
by the National Science Foundation (NSF) at the Department of Mathematics
at the University of Georgia (UGA). We would like to acknowledge the NSF VIGRE
grants DMS-0089927, and DMS-0738586 for partial financial support of this project.
The VIGRE Algebra Group consists of 3 faculty members, 1 postdoctoral fellow,
and 6 graduate students. The group is led by Brian D. Boe, Leonard Chastkofsky and Daniel K. Nakano.
The email addresses of the members of the group are given below.
\vskip .15cm
\begin{tabbing}
\hspace*{\parindent}\=Leonard Chastkofsky\quad\= \kill
Faculty: \\[2pt]
\>Brian D. Boe    \> brian@math.uga.edu \\
\>Leonard Chastkofsky \> lenny@math.uga.edu \\
\>Daniel K. Nakano    \> nakano@math.uga.edu \\[3pt]
\\
Postdoctoral Fellow: \\[2pt]
\>Benjamin Jones \> bjones@math.uga.edu \\[3pt]
Graduate Students: \\[2pt]
\>Irfan Bagci \> bagci@math.uga.edu                    \\
\>Benjamin Connell \>  bconnell@math.uga.edu                   \\
\>Wenjing Li \>  wli@math.uga.edu                       \\
\>Kenyon J. Platt    \> platt@math.uga.edu  \\
\>Jae-Ho Shin \> jshin@math.uga.edu \\
\>Caroline B. Wright \> cwright@math.uga.edu \\
\end{tabbing}

\let\section=\oldsection
\bibliographystyle{amsmath}
\bibliography{vigre5}

\newcommand{\noopsort}[1]{}\def\cprime{$'$}
  \def\germ{\mathfrak}\def\Dbar{\leavevmode\lower.6ex\hbox to 0pt{\hskip-.23ex
  \accent"16\hss}D} \def\cprime{$'$}
  \def\germ{\mathfrak}\def\scr{\mathcal}\def\cprime{$'$}
\begin{thebibliography}{BNPP}

\bibitem[BNPP]{BNPP}
Christopher~P. Bendel, Daniel~K. Nakano, Brian~J. Parshall, and Cornelius
  Pillen, {\em Cohomology for quantum groups via the geometry of the nullcone},
  preprint, 2007.

\bibitem[BNW]{BNW}
Brian~D. Boe, Daniel~K. Nakano, and Emilie Wiesner, {\em Category {$\scr O$}
  for the {V}irasoro algebra: cohomology and {K}oszulity}, Pacific J. Math.
  \textbf{234} (2008), no.~1, 1--21.

\bibitem[DCK]{DK}
Corrado De~Concini and Victor~G. Kac, {\em Representations of quantum groups at
  roots of {$1$}}, Operator algebras, unitary representations, enveloping
  algebras, and invariant theory (Paris, 1989), Progr. Math., vol.~92,
  Birkh\"auser Boston, Boston, MA, 1990, pp.~471--506.

\bibitem[FP]{FrPa:86b}
Eric~M. Friedlander and Brian~J. Parshall, {\em Cohomology of infinitesimal and
  discrete groups}, Math. Ann. \textbf{273} (1986), no.~3, 353--374.

\bibitem[GK]{GK}
Victor Ginzburg and Shrawan Kumar, {\em Cohomology of quantum groups at roots
  of unity}, Duke Math. J. \textbf{69} (1993), no.~1, 179--198.

\bibitem[GW]{GW}
Roe Goodman and Nolan~R. Wallach, {\em Representations and invariants of the
  classical groups}, Encyclopedia of Mathematics and its Applications, vol.~68,
  Cambridge University Press, Cambridge, 1998.

\bibitem[HK]{HK}
Jin Hong and Seok-Jin Kang, {\em Introduction to quantum groups and crystal
  bases}, Graduate Studies in Mathematics, vol.~42, American Mathematical
  Society, Providence, RI, 2002.

\bibitem[Hum]{Hum}
James~E. Humphreys, {\em Representations of semisimple {L}ie algebras in the
  {B}{G}{G} category {$\mathcal{O}$}}, Graduate Studies in Mathematics,
  vol.~94, American Mathematical Society, 2008.

\bibitem[Jan1]{jan:quantum}
Jens~Carsten Jantzen, {\em Lectures on quantum groups}, Graduate Studies in
  Mathematics, vol.~6, American Mathematical Society, Providence, RI, 1996.

\bibitem[Jan2]{jan:rep}
\bysame, {\em Representations of algebraic groups}, second ed., Mathematical
  Surveys and Monographs, vol. 107, American Mathematical Society, Providence,
  RI, 2003.

\bibitem[Jan3]{Jan3}
\bysame, {\em Nilpotent orbits in representation theory}, Lie theory, Progr.
  Math., vol. 228, Birkh\"auser Boston, Boston, MA, 2004, pp.~1--211.

\bibitem[Kna]{Knapp}
Anthony~W. Knapp, {\em Lie groups, {L}ie algebras, and cohomology},
  Mathematical Notes, vol.~34, Princeton University Press, Princeton, NJ, 1988.

\bibitem[Kum]{Kum}
Shrawan Kumar, {\em Kac-{M}oody groups, their flag varieties and representation
  theory}, Progress in Mathematics, vol. 204, Birkh\"auser Boston Inc., Boston,
  MA, 2002.

\bibitem[Mal]{M}
Fyodor Malikov, {\em Quantum groups: singular vectors and {BGG} resolution},
  Infinite analysis, Part A, B (Kyoto, 1991), Adv. Ser. Math. Phys., vol.~16,
  World Sci. Publ., River Edge, NJ, 1992, pp.~623--643.

\bibitem[PW]{PW}
Brian Parshall and Jian~Pan Wang, {\em Quantum linear groups}, Mem. Amer. Math.
  Soc. \textbf{89} (1991), no.~439, vi+157.

\bibitem[PT]{PT}
Patrick Polo and Jacques Tilouine, {\em Bernstein-{G}elfand-{G}elfand complexes
  and cohomology of nilpotent groups over {$\mathbb Z\sb {(p)}$} for
  representations with {$p$}-small weights}, Ast\'erisque (2002), no.~280,
  97--135, Cohomology of Siegel varieties.

\bibitem[RC]{RC}
Alvany Rocha-Caridi, {\em Splitting criteria for {${\mathfrak g}$}-modules
  induced from a parabolic and the {B}er\v nste\u\i
  n-{G}el{$'$}fand-{G}el{$'$}fand resolution of a finite-dimensional,
  irreducible {${\mathfrak g}$}-module}, Trans. Amer. Math. Soc. \textbf{262}
  (1980), no.~2, 335--366.

\bibitem[UGA]{UGAVIGRE}
University {\noopsort{university}}of Georgia VIGRE Algebra~Group, {\em On
  {K}ostant's theorem for {L}ie algebra cohomology}, to appear in Cont. Math.,
  2008.

\end{thebibliography}
\end{document}